\begin{document}
\theoremstyle{plain}
\newtheorem{theorem}{Theorem}
\newtheorem{corollary}[theorem]{Corollary}
\newtheorem{lemma}[theorem]{Lemma}
\newtheorem{proposition}[theorem]{Proposition}
\newtheorem{remark}[theorem]{Remark}

\theoremstyle{definition}
\newtheorem{defn}{Definition}
\newtheorem{definition}[theorem]{Definition}
\newtheorem{example}[theorem]{Example}
\newtheorem{conjecture}[theorem]{Conjecture}

\title{ On certain Semigroup of Order-decreasing  Full Contraction Mappings of a Finite Chain}
\author{\bf  M. M. Zubairu\footnote{Corresponding Author. ~~Email: $mmzubairu.mth@buk.edu.ng$}~ A. Umar ~and J. A. Aliyu   \\
\it\small  Department of Mathematics, Bayero  University Kano, PM. Box 3011 Kano Nigeria\\
\it\small  \texttt{mmzubairu.mth@buk.edu.ng}\\[3mm]
\it\small Khalifa University, P. O. Box 127788, Sas al Nakhl, Abu Dhabi, UAE\\
\it\small \texttt{abdullahi.umar@ku.ac.ae}\\[3mm]
\it\small  Department of Mathematics, Bayero  University Kano, PM. Box 3011 Kano Nigeria\\
\it\small \texttt{aliyu.jafar@gmail.com}\\
}
\date{\today.}
\maketitle\

\begin{abstract}
 Let $\mathcal{CT}_n$ be the semigroup of full contraction mappings on $[n]=\{1,2,\ldots,n\}$, and let $\mathcal{OCT}_n$ and $\mathcal{ODCT}_n$ be the subsemigroups consisting of all order-preserving full contraction and subsemigroup of order-decreasing and order-preserving full contraction mappings, respectively. In this paper, we show that the semigroup $\mathcal{ODCT}_n$  is left adequate. We further study the rank properties and as well obtain the rank of the semigroup, $\mathcal{ODCT}_n$. Moreover, we obtain a characterization of natural partial order for the semigroup $\mathcal{OCT}_n$ and  its subsemigroup $\mathcal{ODCT}_n$, respectively.
 \end{abstract}
\emph{2010 Mathematics Subject Classification. 20M20.}

\section{Introduction}

 Denote $[n]$  to be a finite chain $\{1,2, \ldots ,n\}$ and  $\mathcal{P}_{n}$ (resp., $\mathcal{T}_n$) to be the semigroup of partial transformations on $[n]$ (resp., semigroup of full transformations on $[n]$). A map $\alpha\in \mathcal{T}_{n}$ is said to be \emph{order-preserving} (resp., \emph{order-reversing}) if  (for all $x,y \in [n]$) $x\leq y$ implies $x\alpha\leq y\alpha$ (resp., $x\alpha\geq y\alpha$); is \emph{order-increasing} (resp., \emph{order-decreasing}) if (for all $x\in [n]$) $x\leq x\alpha$ (resp., $x\alpha\leq x$);  an \emph{isometry} (i.e., \emph{ distance preserving}) if (for all $x,y \in [n]$) $\vert{x\alpha-y\alpha}\vert=\vert{x-y}\vert$ and   a \emph{contraction} if (for all $x,y \in [n]$) $\vert{x\alpha-y\alpha}\vert\leq \vert{x-y}\vert$. The collection of all contraction mappings on $[n]$  denoted by $\mathcal{CT}_{n}$ is known to be the \emph{semigroup of full contraction mappings}.
The study of this semigroup and some of its subsemigroups was first proposed in 2013 by Umar and Alkharousi \cite{af}. In this proposal, notations for the semigroup and some of its various subsemigroups were given. We will also maintain the same notations in this paper. Let

\begin{equation} \mathcal{OCT}_{n}=\{\alpha\in \mathcal{CT}_{n}:(\textnormal{for all }x,y\in [n])~x\leq y~ \Rightarrow~ x\alpha\leq y\alpha\} \end{equation}
be the semigroup of order-preserving full contractions,

\begin{equation} \mathcal{ORCT}_{n}=\mathcal{OCT}_{n}\cup\{\alpha\in \mathcal{CT}_{n}:( \textnormal{for all }x,y\in [n])~x\leq y~ \Rightarrow~  x\alpha\geq y\alpha ~\} \end{equation} be the semigroup of order-preserving or order-reversing full contractions and
\begin{equation}\label{od} \mathcal{ODCT}_{n}=\{\alpha\in \mathcal{OCT}_{n}:(\textnormal{for all }x \in [n])~ \Rightarrow~ x\alpha\leq x\} \end{equation}
be the semigroup of order-decreasing and order-preserving full contractions on $[n]$. Then it is clear that $\mathcal {ORCT}_n $ is a subsemigroup of $\mathcal{CT}_n$, while $\mathcal{ODCT}_n$ and $\mathcal{OCT}_n$ are subsemigroups of $\mathcal{ORCT}_n$. In 2013, Zhao and Yang \cite{py} characterized regular elements and all the Green's equivalences for the semigroup $\mathcal{OCP}_{n}$ (where $\mathcal{OCP}_{n}$ denote the semigroup of order-preserving partial contractions on $[n]$). Ali \emph{et al.,}\cite{mans1} extend the results of Zhao and Yang \cite{py} to a more general semigroup of partial contractions $\mathcal{CP}_n$. They obtained a  characterization for the regular elements and  all the Green's relations for the larger semigroup $\mathcal{CP}_{n}$. The complete characterization of Green's relations for the semigroup of full contraction $\mathcal{CT}_n$ were all obtained in \cite{mans1}. It is worth noting that the combinatorial results for the semigroups  $\mathcal{ORCT}_{n},$   $\mathcal{OCT}_{n}$ and $\mathcal{ODCT}_{n}$ were investigated by Adeshola and Umar \cite{au}. The cardinalities of the semigroups as well as the cardinality of the set of idempotents in each semigroup were obtained in \cite{au}. Furthermore,   Kemal \cite{kt} compute the ranks of the semigroups  $\mathcal{OCT}_{n}$ and $\mathcal{ORCT}_{n}$. Moreover, Leyla \cite{leyla} generalized  the work of Kemal \cite{kt} and obtained the rank of the two sided ideal of the semigroups  $\mathcal{OCT}_{n}$ and $\mathcal{ORCT}_{n}$. However, it appears that the rank and algebraic properties of the semigroup $\mathcal{ODCT}_{n}$ have not been investigated. In this paper, we intend to study the Green's relations, their starred analogue and rank properties. We also intend to characterize partial order relation on $\mathcal{OCT}_n$ and $\mathcal{ODCT}_n$.

In this section, we give a brief introduction,  basic definitions  and characterize the elements of $\mathcal{ODCT}_n$. In section 2, we characterize all the Green's equivalence and the regular elements in $\mathcal{ODCT}_n$. In section 3, we characterize the starred analogue of the Green's equivalences and show that $\mathcal{ODCT}_n$ is left abundant for all $n$ but not right abundant for $n\geq 3$. In section 4, we show that $\mathcal{ODCT}_n$ is left adequate and moreover, we investigate the rank of $\mathcal{ODCT}_n$. In section 5, we characterize partial order relation on the semigroups $\mathcal{OCT}_n$ and $\mathcal{ODCT}_n$.

 For a contraction $\alpha$ in $\mathcal{CT}_{n}$, we shall denote  Im$~\alpha$,  rank$~\alpha$ and id$_{A}$ to be the image  of $\alpha$, $|\textnormal{Im }\alpha|$ and identity on $A\subseteq[n]$, respectively. For $\alpha, ~\beta \in \mathcal{CT}_{n}$,  the composition of $\alpha$ and $\beta$ is defined as $x(\alpha \circ \beta) =((x)\alpha)\beta$ for all $x$ in $[n]$. We shall be using the notation $\alpha\beta$ to denote $\alpha \circ\beta$ in our subsequent discussions. An element $a$ in a semigroup $S$ is said to be an \emph{idempotent} if and only if $a^{2} = a$. It is well known that $\alpha \in \mathcal{T}_n$ is an idempotent if and only if Im$~\alpha = F(\alpha)$ (where $ F(\alpha) = \{x \in [n] : x\alpha = x \}$). If $S$ is a commutative semigroup and all  its elements are idempotents (i.e., $S = E(S)$), then $S$ is said to be a \emph{semilattice}. In this case, for all $\alpha,\beta \in S$, $\alpha^2 = \alpha$ and $\alpha\beta = \beta\alpha$. For basic concepts in semigroup theory, we refer the reader to  \cite{ph, howi}.

 Next, given any transformations $\alpha$ in $\mathcal{OCT}_n$, the domain of $\alpha$ is partitioned into \emph{blocks} by the relation ker~$\alpha=\{(x,y)\in[n]\times[n]:x\alpha=y\alpha\}$, so that as in \cite{py} $\alpha$  can be expressed  as \begin{equation}\label{1} \alpha=\left( \begin{array}{cccc}
            A_1 & A_2 & \ldots & A_p \\
            x+1 & x+2 & \ldots & x+p
          \end{array} \right) \ (1\leq p\leq n),
 \end{equation}\noindent where $A_i ~(1\leq i\leq p)$ are equivalence classes under the relation ker $\alpha$, i.e., $A_i=(x+i)\alpha^{-1} (1\leq i \leq p)$  (with  $A_i < A_j$ and $x+i<x+j$  if and only if $i<j$), moreover $(x+i)-(x+i-1)\leq \min A_{i}-\max A_{i-1}$, $i=2,\ldots,p-1$ (if $p>1$). We shall denote the partition of $[n]$ (by the relation ker $\alpha$) by \textbf{Ker} $\alpha=\{A_1,A_2, \ldots , A_p\}$ so that
$[n] = A_1\cup A_2 \cup \ldots \cup A_p$ where $(1\leq p\leq n)$.

  A subset $T_{\alpha}$ of $[n]$ is said to be a \emph{transversal} of the partition \textbf{Ker}$~\alpha$ if $\vert{T_{\alpha}} \vert = p$ and $\vert {A_i\cap T_\alpha} \vert = 1$  $(1\leq i \leq p)$. A transversal $T_\alpha$ of \textbf{Ker}$~\alpha$ is said to be \emph{convex} if for all $x,y \in T_\alpha$ with $x \leq y$ and if $x < z < y$ for $z \in [n]$ then $z \in T_\alpha.$  A transversal $T_\alpha$ is said to be \emph{admissible} if and only if the map $A_i\mapsto t_i$ $(t_i \in T_\alpha, i \in \{1,2, \ldots,p\})$ is a contraction. The partition \textbf{Ker}$~\alpha$ is said to be a \emph{convex partition} if it has a convex admissible transversal.

The elements in $\mathcal{ODCT}_n$ can be uniquely expressed as in the following lemma:
\begin{lemma}\label{nj} Every element   $\alpha\in\mathcal{ODCT}_n$ can be expressed as
$$ \alpha=\left( \begin{array}{cccc}
            A_1 & A_2 & \ldots & A_p \\
            1 & 2 & \ldots & p
          \end{array} \right)\, (1\leq p\leq n).$$
\end{lemma}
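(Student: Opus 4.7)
The plan is to start from the general form \eqref{1} that any element of $\mathcal{OCT}_n$ (and hence any element of $\mathcal{ODCT}_n$) admits, and then use the order-decreasing property to pin down the offset $x$ to be $0$.

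Concretely, take $\alpha\in\mathcal{ODCT}_n$ and write it in its canonical $\mathcal{OCT}_n$ form
$$\alpha=\left( \begin{array}{cccc} A_1 & A_2 & \ldots & A_p \\ x+1 & x+2 & \ldots & x+p \end{array}\right),$$
as guaranteed by \eqref{1}. Since $\{A_1,\ldots,A_p\}$ is a partition of $[n]$ satisfying $A_i<A_j$ whenever $i<j$, I observe that the minimum element $1$ of $[n]$ must lie in the first block $A_1$. Applying the order-decreasing property in the definition \eqref{od} to this element gives $1\alpha\leq 1$. But by the form above, $1\alpha=x+1$, so $x+1\leq 1$, and since $x+1\geq 1$ (as the image lies in $[n]$), we conclude $x+1=1$, i.e., $x=0$. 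Substituting back yields the claimed form.

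For uniqueness of the representation, note that the blocks $A_1,\ldots,A_p$ are exactly the equivalence classes of $\ker\alpha$ read off in increasing order, and the image values $1,2,\ldots,p$ are then forced by $\alpha$ itself; so the only freedom in the expression is the labelling, which is fixed by the order convention $A_1<A_2<\cdots<A_p$.

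The argument is essentially a single line once \eqref{1} is in hand, so I do not anticipate any serious obstacle; the only point that has to be stated carefully is the justification that $1\in A_1$, which rests on the ordering convention on the blocks together with the fact that the $A_i$ partition all of $[n]$.
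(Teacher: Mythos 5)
Your proposal is correct and follows essentially the same route as the paper: write $\alpha$ in the canonical form \eqref{1}, observe that $1\in A_1$, and use the order-decreasing condition to force $x+1=1$, hence $x=0$. The extra remark on uniqueness is harmless but not needed for the statement.
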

\begin{proof}
 Let $\alpha\in \mathcal{ODCT}_n$ be as expressed in equation \eqref{1}, i.e.,
$$ \alpha=\left( \begin{array}{cccc}
            A_1 & A_2 & \ldots & A_p \\
            x+1 & x+2 & \ldots & x+p
          \end{array} \right).$$ \noindent Since $\alpha $ is order-preserving then  $A_1<A_2< \ldots < A_p$ and   $x+1<x+2< \ldots < x+p$. Moreover, since  $\alpha$ is order-decreasing, then  $x+i\leq a$  for all  $ a\in A_i$ $ (1\leq i\leq n)$. In particular, $x+1\leq \min A_1$. Notice that  $1\in A_1$, which implies that $x+1= 1$, and so  $x=0$. Thus  $A_i\alpha=i$ for $1\leq i\leq p$, as required.
\end{proof}

Let
\begin{equation} \label{ab} \alpha=\left( \begin{array}{cccc}
            A_1 & A_2 & \ldots & A_p \\
            1 & 2 & \ldots & p
          \end{array} \right) \textnormal{ and } \beta=\left( \begin{array}{cccc}
            B_1 & B_2 & \ldots & B_p \\
            1 & 2 & \ldots & p
          \end{array} \right)\in \mathcal{ODCT}_{n},
\end{equation}\noindent then we have the following remark.
\begin{remark}\label{rem} For $\alpha, \beta\in \mathcal{ODCT}_{n}$:
\begin{itemize}
\item  If $|\textnormal{im }\alpha|=|\textnormal{im }\beta|$ then im $\alpha=$im $\beta$;
\item If $\textnormal{ker } \alpha= \textnormal{ker }\beta$ then $\alpha=\beta$.
\end{itemize}
\end{remark}

\section{Regularity and Green's relation}
For the definitions of the five Green's relations: $\mathcal{L}$, $\mathcal{R}$, $\mathcal{D}$, $\mathcal{H}$ and $\mathcal{J}$, we refer the reader to Howie \cite{howi} and Higgins \cite{hig}. It is well known that on a finite semigroup the relationss $\mathcal{D}$ and $\mathcal{J}$ are equal. The characterizations of Green's relations on various transformation semigroups have been investigated by many authors, for example see \cite{m1,grf1,grf2,grf3,py}. It is also well known that the semigroup of order-preserving and order-decreasing full transformation $\mathcal{C}_{n}$ is  $\mathcal{J}-$trivial, see for example \cite{hig1, hig, lu}. Thus we have the following result.

\begin{theorem} Let $\mathcal{ODCT}_n$ be as defined in equation \eqref{od}. Then $\mathcal{ODCT}_n$ is $\mathcal{J}-$trivial.
 \end{theorem}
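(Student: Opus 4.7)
The natural strategy, clearly signalled by the sentence immediately preceding the theorem, is to deduce $\mathcal{J}$-triviality of $\mathcal{ODCT}_n$ from the known $\mathcal{J}$-triviality of the ambient semigroup $\mathcal{C}_n$ of all order-preserving and order-decreasing full transformations on $[n]$. I would proceed in two short steps.

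First I would observe that $\mathcal{ODCT}_n \subseteq \mathcal{C}_n$: every element of $\mathcal{ODCT}_n$ is by definition an order-preserving and order-decreasing full transformation on $[n]$, the contraction condition being merely an extra restriction. Hence $\mathcal{ODCT}_n$ is a subsemigroup of $\mathcal{C}_n$. Second, I would invoke the elementary fact that $\mathcal{J}$-triviality is inherited by subsemigroups. Concretely, if $\alpha, \beta \in \mathcal{ODCT}_n$ satisfy $\alpha \,\mathcal{J}\, \beta$ in $\mathcal{ODCT}_n$, then there exist $\gamma_1, \gamma_2, \delta_1, \delta_2 \in \mathcal{ODCT}_n^1$ with $\alpha = \gamma_1 \beta \gamma_2$ and $\beta = \delta_1 \alpha \delta_2$. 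Since $\mathcal{ODCT}_n^1 \subseteq \mathcal{C}_n^1$, the same identities hold in $\mathcal{C}_n$, so $\alpha \,\mathcal{J}\, \beta$ in $\mathcal{C}_n$, and the $\mathcal{J}$-triviality of $\mathcal{C}_n$ recalled from \cite{hig1, hig, lu} forces $\alpha = \beta$.

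There is no genuine obstacle; the only subtle point is the treatment of adjoined identities, which is handled by the observation that $\mathcal{ODCT}_n^1$ embeds naturally into $\mathcal{C}_n^1$. Should one prefer an argument that does not appeal externally to $\mathcal{C}_n$, one could combine Lemma~\ref{nj} (which fixes the image of every element of $\mathcal{ODCT}_n$ as an initial segment $\{1,\ldots,p\}$) with Remark~\ref{rem} (which upgrades kernel-equality to equality of transformations), and then exploit the fact that an order-preserving and order-decreasing bijection of $\{1,\ldots,p\}$ onto itself must be the identity to pin down the two-sided multipliers; but the subsemigroup-inheritance route is considerably cleaner and is evidently the one the authors intend.
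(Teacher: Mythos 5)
Your proposal is correct and follows exactly the paper's argument: $\mathcal{ODCT}_n$ is a subsemigroup of the $\mathcal{J}$-trivial semigroup $\mathcal{C}_n$, and $\mathcal{J}$-triviality passes to subsemigroups. You merely spell out the inheritance step (which the paper leaves implicit), so the two proofs coincide in substance.
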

 \begin{proof}
 Since the semigroup $\mathcal{ODCT}_n$ is a subsemigroup of  $\mathcal{C}_{n}$, then $\mathcal{ODCT}_n$ is $\mathcal{J}-$trivial.\end{proof}
  As a consequence, we have the following corollaries.

  \begin{corollary}  On the semigroup $\mathcal{ODCT}_n$, $\mathcal{L}=\mathcal{R}=\mathcal{D}=\mathcal{H}=\mathcal{J}$.
\end{corollary}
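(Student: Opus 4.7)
The plan is to exploit the well-known lattice of inclusions among the five Green's relations together with the $\mathcal{J}$-triviality just established. Recall that in any semigroup one has $\mathcal{H}\subseteq \mathcal{L}\subseteq \mathcal{D}\subseteq \mathcal{J}$ and $\mathcal{H}\subseteq \mathcal{R}\subseteq \mathcal{D}\subseteq \mathcal{J}$. Since $\mathcal{ODCT}_n$ is $\mathcal{J}$-trivial by the preceding theorem, every $\mathcal{J}$-class is a singleton, i.e.\ $\mathcal{J}$ coincides with the identity relation on $\mathcal{ODCT}_n$.

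Given these inclusions, if $\alpha,\beta\in\mathcal{ODCT}_n$ are related by any of $\mathcal{H}$, $\mathcal{L}$, $\mathcal{R}$, or $\mathcal{D}$, then in particular $(\alpha,\beta)\in \mathcal{J}$, which forces $\alpha=\beta$. Conversely, the identity relation is contained in $\mathcal{H}$ (since $\mathcal{H}$ is reflexive). Therefore each of $\mathcal{H}$, $\mathcal{L}$, $\mathcal{R}$, $\mathcal{D}$, $\mathcal{J}$ equals the identity relation on $\mathcal{ODCT}_n$, and in particular they all coincide.

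The proof is a two-line formal deduction from the previous theorem together with the standard containments; there is no genuine obstacle to overcome. The only thing worth writing explicitly in the proof is the chain of inclusions and the remark that triviality of the coarsest relation $\mathcal{J}$ collapses the entire chain down to equality.
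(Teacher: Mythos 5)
Your proof is correct and is precisely the argument the paper intends: the paper offers no explicit proof, merely stating the corollary "as a consequence" of $\mathcal{J}$-triviality, and your chain of inclusions $\mathcal{H}\subseteq\mathcal{L},\mathcal{R}\subseteq\mathcal{D}\subseteq\mathcal{J}$ combined with reflexivity is exactly the standard deduction being invoked.
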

Now since $\mathcal{ODCT}_n$ is $\mathcal{R}$ trivial then we have the following.
\begin{corollary} An element $\alpha\in  \mathcal{ODCT}_n$ is regular if and only if $\alpha$ is an idempotent.
\end{corollary}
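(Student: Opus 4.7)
The plan is to exploit the $\mathcal{R}$-triviality of $\mathcal{ODCT}_n$ already recorded in the preceding corollary, together with a standard fact from semigroup theory.

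For the easy direction, I would observe that every idempotent $e$ in any semigroup satisfies $e=e\cdot e\cdot e$, so idempotents are always regular; this handles ``$\Leftarrow$'' in one line and requires nothing special about $\mathcal{ODCT}_n$.

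For the nontrivial direction, I would invoke the classical characterization (Howie \cite{howi}, Proposition~2.3.2): an element $\alpha$ of a semigroup $S$ is regular if and only if its $\mathcal{R}$-class $R_\alpha$ contains an idempotent. Since the previous corollary establishes that $\mathcal{L}=\mathcal{R}=\mathcal{D}=\mathcal{H}=\mathcal{J}$ on $\mathcal{ODCT}_n$ and the $\mathcal{J}$-class of any element is a singleton, the $\mathcal{R}$-class of $\alpha$ is simply $R_\alpha=\{\alpha\}$. Thus if $\alpha$ is regular, the idempotent guaranteed to lie in $R_\alpha$ must be $\alpha$ itself, giving $\alpha^2=\alpha$.

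There is really no obstacle here: the corollary is a one-line deduction from the preceding $\mathcal{R}$-triviality corollary combined with the well-known fact that regularity is witnessed inside each $\mathcal{R}$-class by an idempotent. The only thing to be careful about is citing the correct direction of the Howie fact and pointing out that $\mathcal{R}$-triviality forces any such idempotent to coincide with $\alpha$.
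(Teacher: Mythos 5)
Your argument is correct and is exactly the reasoning the paper intends: the paper offers no written proof beyond the remark that $\mathcal{ODCT}_n$ is $\mathcal{R}$-trivial, and your proposal simply makes explicit the standard fact (regular elements have an idempotent in their $\mathcal{R}$-class) that turns $\mathcal{R}$-triviality into the stated equivalence. Nothing further is needed.
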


\section{Starred Green's relations}
Let $S$ be a semigroup.  An element $\alpha \in S$ is said to be \emph{regular} if and only if there exists $\gamma \in S$ such that $\alpha = \alpha\gamma\alpha$. A semigroup $S$ is said to be regular if all the elements of $S$ are regular. On a semigroup $S$, the relation $\mathcal{L}^*$ is defined by the rule that $(\alpha,\beta) \in\mathcal{L}^*$ if and only if $\alpha$ and $\beta$ are related by Green's $\mathcal{L}$ relation in some over semigroup of $S$. The relation $\mathcal{R}^*$ is defined dually and the relation $\mathcal{D}^*$ is defined as the join of the relations $\mathcal{L}^* \textnormal{ and } \mathcal{R}^*$, where the intersection of the relations $\mathcal{L}^* \textnormal{ and } \mathcal{R}^*$ is defined as $\mathcal{H}^*.$ A semigroup $S$ is said to be  \emph{right abundant}  (resp., \emph{left abundant}) if each $\mathcal{L}^* - class$ (resp., each $\mathcal{R}^* - class$) contains an idempotent, and is abundant if each $\mathcal{L}^* - \textnormal{class  and } \mathcal{R}^* - class$  of $S$ contains an idempotent. An abundant semigroup in which the set $E(S)$ is a subsemigroup is said to be \emph{quasi adequate} and if $E(S)$  is commutative then it is said to be \emph{adequate}, see \cite{mans1} for more details on adequate semigroups. Many classes of semigroups of transformations  were found to be regular and those that are not regular, their regular elements have been characterized, for example see \cite { m1,R1,R2,py}. If a semigroup is not regular, then their is  a need to investigate the class to which the semigroup belongs. To carry out such investigation one would naturally  characterize its starred Green's relations. In this section we investigate regularity, characterize the starred Green's relations and show that  $\mathcal{ODCT}_n$ is left  adequate. As in \cite{howi}, the relations $\mathcal{L}^* \textnormal{ and }\mathcal{R}^*$ have the following characterizations:
\begin{equation}\label{st1} \mathcal{L}^* =\{(\alpha,\beta) : (\textnormal{ for all } \mu,\lambda \in S^1)\alpha\mu = \alpha\lambda \textnormal{ iff } \beta\mu =\beta\lambda \}
\end{equation}\noindent and
\begin{equation}\label{st2} \mathcal{R}^* = \{(\alpha,\beta) : ( \textnormal{ for all } \mu,\lambda \in S^1)~~\mu\alpha = \lambda\alpha \textnormal{ iff } \mu\beta =\lambda\beta \}.
\end{equation}

We now give  characterizations of the Starred Green's relations on $\mathcal{ ODCT}_n$ in the theorem below. The proof of the theorem is a simplified version of the proof of  Theorem 1 in [\cite{am}].
\begin{theorem}\label{star}
Let $\alpha,\beta \in \mathcal{ODCT}_n$ be as expressed in equation \eqref{ab}. Then
\begin{itemize}
\item[(i)] $(\alpha,\beta)\in \mathcal{L}^\ast$ if and only if $\textnormal{Im }\alpha   =\textnormal{Im } \beta $;
\item[(ii)] $(\alpha,\beta)\in \mathcal{R}^\ast$ if and only if $\alpha=\beta$;
\item[(iii)]$\mathcal{H}^\ast=\mathcal{R}^\ast$;
\item[(iv)]$\mathcal{D}^\ast=\mathcal{L}^\ast$.
\end{itemize}
\end{theorem}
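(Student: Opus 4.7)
My plan is to exploit the canonical form of Lemma~\ref{nj}, which via Remark~\ref{rem} reduces (i) to showing $|\textnormal{Im }\alpha|=|\textnormal{Im }\beta|$ and reduces (ii) to showing $\ker\alpha=\ker\beta$. In both cases I invoke the equational characterisations \eqref{st1}--\eqref{st2}: the ``if'' directions fall to a direct calculation, while the ``only if'' directions are established contrapositively by producing explicit witnesses $\mu,\lambda$. Note that $\textnormal{id}_{[n]}\in\mathcal{ODCT}_n$, so $\mathcal{ODCT}_n^{1}=\mathcal{ODCT}_n$ and all witnesses may be chosen inside $\mathcal{ODCT}_n$ itself.

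For (i), sufficiency is immediate since $\alpha\mu=\alpha\lambda$ is equivalent to $\mu$ and $\lambda$ agreeing pointwise on $\textnormal{Im }\alpha$, and analogously for $\beta$; equal images render the two conditions identical. For necessity, assume $p=|\textnormal{Im }\alpha|>q=|\textnormal{Im }\beta|$ (the reverse case being symmetric). Take $\mu=\textnormal{id}_{[n]}$ and let $\lambda$ be the map $x\mapsto\min(x,p-1)$; a brief check confirms $\lambda\in\mathcal{ODCT}_n$. Since $\mu$ and $\lambda$ agree on $\{1,\ldots,p-1\}\supseteq\textnormal{Im }\beta$, we get $\beta\mu=\beta\lambda$, yet they disagree at $p\in\textnormal{Im }\alpha$, forcing $\alpha\mu\neq\alpha\lambda$ and contradicting $(\alpha,\beta)\in\mathcal{L}^{\ast}$.

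For (ii) sufficiency is trivial. For necessity, assume $\alpha\neq\beta$; then Remark~\ref{rem} forces $\ker\alpha\neq\ker\beta$. Because both elements are order-preserving with canonical image an initial segment, each kernel class is a consecutive interval of $[n]$, so any discrepancy appears at a block boundary and I can locate $a\in\{1,\ldots,n-1\}$ with, without loss of generality, $a\sim_{\alpha}a+1$ but $a\not\sim_{\beta}a+1$. Now take $\mu\colon x\mapsto\min(x,a)$ and $\lambda\colon x\mapsto\min(x,a+1)$; both are idempotents in $\mathcal{ODCT}_n$. For every $x$, the pair $\{x\mu,x\lambda\}$ is either a singleton or equals $\{a,a+1\}$, and in either case $(x\mu)\alpha=(x\lambda)\alpha$, so $\mu\alpha=\lambda\alpha$. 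However at $x=a+1$ we have $(a+1)\mu\beta=a\beta\neq(a+1)\beta=(a+1)\lambda\beta$, so $\mu\beta\neq\lambda\beta$, contradicting $(\alpha,\beta)\in\mathcal{R}^{\ast}$.

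Once (i) and (ii) are in hand, parts (iii) and (iv) are formal: since $\mathcal{R}^{\ast}$ is the identity relation on $\mathcal{ODCT}_n$, it is contained in every equivalence on $\mathcal{ODCT}_n$, in particular in $\mathcal{L}^{\ast}$, whence $\mathcal{H}^{\ast}=\mathcal{L}^{\ast}\cap\mathcal{R}^{\ast}=\mathcal{R}^{\ast}$ and $\mathcal{D}^{\ast}=\mathcal{L}^{\ast}\vee\mathcal{R}^{\ast}=\mathcal{L}^{\ast}$. The one step I expect to require care is the construction in (ii): one must verify that $\mu$ and $\lambda$ actually satisfy the contraction inequality across the cut point (the order-preserving, order-decreasing and idempotency conditions are automatic), and one must justify the existence of the pair $a,a+1$, which rests precisely on the interval structure of the kernel classes of elements of $\mathcal{ODCT}_n$.
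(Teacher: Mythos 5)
Your proof is correct, and while its overall skeleton matches the paper's (reduce (i) to equality of image sizes and (ii) to equality of kernels via Remark~\ref{rem}, then test the equational characterizations \eqref{st1}--\eqref{st2} against explicit witnesses), the witnesses you choose are genuinely different and in one place materially better. For (ii) the paper tests $\mathcal{R}^*$ against the constant maps $\left(\begin{smallmatrix}[n]\\ x\end{smallmatrix}\right)$ for arbitrary $x$; but such a map is order-decreasing only when $x=1$, so for $x>1$ it does not lie in $\mathcal{ODCT}_n^{1}$ and cannot legitimately be substituted into \eqref{st2}. Your truncation maps $x\mapsto\min(x,a)$ and $x\mapsto\min(x,a+1)$ are visibly order-preserving, order-decreasing contractions, and your boundary-point argument (locating $a$ with $a\sim_\alpha a+1$ but $a\not\sim_\beta a+1$, which exists because kernel classes of order-preserving maps are intervals) closes the gap cleanly; this is the step you rightly flagged as needing care. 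For (i) your converse is a short self-contained computation ($\alpha\mu=\alpha\lambda$ iff $\mu,\lambda$ agree on $\textnormal{Im}\,\alpha$) where the paper instead quotes the $\mathcal{L}$-characterization in the oversemigroup $\mathcal{P}_n$; both are fine, yours avoids the external citation. Parts (iii) and (iv) are handled identically in both arguments.
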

\begin{proof}
\begin{itemize}
\item[(i)] Let $\alpha,\beta \in \mathcal{ODCT}_n$ be as expressed in equation  \eqref{ab} and suppose  $(\alpha,\beta) \in \mathcal{L}^*$. Notice that $\textnormal{Im }\alpha = \{1,\ldots,p\}$. Now consider $\mu = \left( \begin{array}{cccc}
            1 & 2 & \ldots & \{i,\ldots,n\}\\
            1 & 2 & \ldots & i
          \end{array} \right) (1\leq p\leq i\leq n).$ \noindent Then clearly $\mu \in \mathcal{ODCT}_n$ and  $$\alpha \cdot \left( \begin{array}{cccc}
            1 & 2 & \ldots & \{i,\ldots,n\}\\
            1 & 2 & \ldots & i
          \end{array} \right) = \alpha \cdot id_{[n]}$$ \noindent if and only if  $$\beta \cdot \left( \begin{array}{cccc}
            1 & 2 & \ldots & \{i,\ldots,n\}\\
            1 & 2 & \ldots & i
          \end{array} \right) = \beta \cdot id_{[n]}~~(\textnormal{by equation }\eqref{st1})$$ which implies that   $\textnormal{Im}~\alpha\subseteq \textnormal{Im}~\beta$. We can  similar show that $\textnormal{Im}~\beta\subseteq \textnormal{Im}~\alpha$. Therefore $\textnormal{Im}~\alpha = \textnormal{Im}~\beta$.

Conversely, suppose  $\textnormal{Im}~\alpha= \textnormal{Im}~\beta$. Then by (\cite{howi}, Exercise 2.6.17) $\alpha \mathcal{L}^{\mathcal{P}_n}\beta$ and it follows from definition that $\alpha\mathcal{L}^*\beta$, the result follows.
          \item[(ii)] Let $\alpha,\beta \in \mathcal{ODCT}_n$ be as expressed in equation  \eqref{ab}. Suppose $(\alpha,\beta) \in \mathcal{R}^*$. Then $(x,y) \in \textnormal{ker }\alpha$ if and only if \begin{align*}x\alpha = y\alpha  & \Longleftrightarrow  \left( \begin{array}{cccc}
            [n] \\
            x
          \end{array} \right)\circ \alpha = \left( \begin{array}{cccc}
            [n] \\
            y
          \end{array} \right)\circ \alpha \\ &     \Longleftrightarrow  \left( \begin{array}{cccc}
            [n] \\
            x
          \end{array} \right)\circ \beta = \left( \begin{array}{cccc}
            [n] \\
            y
          \end{array} \right)\circ \beta~~ (\textnormal{by equation }\eqref{st2}) \\ & \Longleftrightarrow x\beta = y\beta \\ &\Longleftrightarrow (x,y) \in \textnormal{ker }\beta.\end{align*} \noindent Therefore  $\textnormal{ker }\alpha$ =  $\textnormal{ker }\beta$. Thus by Remark \ref{rem} (ii)  $\alpha = \beta$. The converse is clear. Thus  $(\alpha,\beta) \in \mathcal{R}^*$.
          \item[(iii)] The result follows from (i) and (ii).
\item[(iv)] Since $S$ is $\mathcal{R}^*$ trivial then $\mathcal{L}^*= \mathcal{D}^*$.
\end{itemize}
\end{proof}
On the semigroup $\mathcal{ODCT}_{n}$, the relations $\mathcal{D}^{*}$ and $\mathcal{J}^{*}$ are equal as we shall see below. However, we first note the following known result.

\begin{lemma}[\cite{f}, Lemma 1.7(3)]\label{j1}
Let $a,b$ be elements of a semigroup$S$. Then $b\in J^*(a)$ if and only if there are elements $a_0,a_1, \ldots,a_n \in S$, $x_1,x_2,\ldots,x_n,$ $y_1,y_2,\ldots,y_n, \in S^{1}$ such that $a = a_0$, $b = a_n$ and $(d_i, x_ia_{i-1}y_i)\in \mathcal{D}^*$ for $i = 1,2,\ldots,n$.
\end{lemma}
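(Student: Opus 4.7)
The plan is to identify $J^*(a)$ with the smallest $*$-ideal of $S$ containing $a$---that is, the smallest subset containing $a$ that is closed under left and right multiplication by elements of $S^1$ and is a union of $\mathcal{D}^*$-classes (equivalently, closed under $\mathcal{L}^*$ and $\mathcal{R}^*$ separately, since $\mathcal{D}^* = \mathcal{L}^* \vee \mathcal{R}^*$)---and then to prove the two implications by a straightforward induction on the chain length for one direction and a minimality argument for the other.

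For the backward direction, I would show by induction on $i$ that $a_i \in J^*(a)$ for $0 \leq i \leq n$. The base case $a_0 = a$ is immediate. For the inductive step, the hypothesis places $a_{i-1}$ in $J^*(a)$; since $J^*(a)$ is a two-sided ideal, the translate $x_i a_{i-1} y_i$ also lies in $J^*(a)$; and since $J^*(a)$ is $\mathcal{D}^*$-saturated, the stated relation $(a_i, x_i a_{i-1} y_i) \in \mathcal{D}^*$ forces $a_i \in J^*(a)$. Taking $i=n$ gives $b \in J^*(a)$.

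For the forward direction, write $T$ for the set of all $b \in S$ admitting a chain $a = a_0, a_1, \ldots, a_n = b$ with multipliers $x_i, y_i \in S^1$ satisfying $(a_i, x_i a_{i-1} y_i) \in \mathcal{D}^*$ (reading the apparent $d_i$ as $a_i$). I would verify that $T$ is itself a $*$-ideal containing $a$, whence the minimality of $J^*(a)$ forces $J^*(a) \subseteq T$. Membership $a \in T$ follows from the trivial chain of length zero. For closure under two-sided multiplication, given $b \in T$ with chain ending in $a_n = b$ and given $u, v \in S^1$, I append one extra step with $x_{n+1} = u$, $y_{n+1} = v$, $a_{n+1} = u b v$; the required relation $(u b v,\, u a_n v) \in \mathcal{D}^*$ is trivial because both sides coincide. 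For closure under $\mathcal{D}^*$, if $c \mathrel{\mathcal{D}^*} b \in T$, I extend the chain by one further step with $x_{n+1} = y_{n+1} = 1$ and $a_{n+1} = c$.

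The only point that requires any care is the initial identification of $J^*(a)$ with the smallest $*$-ideal through $a$; this rests on the standard observation that an intersection of $*$-ideals is a $*$-ideal, and on the equivalence between $\mathcal{D}^*$-saturation and simultaneous $\mathcal{L}^*$- and $\mathcal{R}^*$-saturation. Granted this standard groundwork, I do not foresee any real obstacle: the argument is a purely formal chain manipulation and does not interact with the particular structure of $\mathcal{ODCT}_n$.
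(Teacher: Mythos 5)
This lemma is quoted in the paper as a known result from Fountain (\cite{f}, Lemma 1.7(3)) and no proof is given there, so there is nothing to compare against; judged on its own, your argument is correct and is essentially the standard proof. Both directions are sound: the induction for sufficiency uses exactly the two defining properties of a $*$-ideal (ideal plus saturation by $\mathcal{L}^*$ and $\mathcal{R}^*$, which together give $\mathcal{D}^*$-saturation since $\mathcal{D}^*$ is the join), and the verification that the set $T$ of chain-reachable elements is itself a $*$-ideal containing $a$ correctly settles necessity via minimality of $J^*(a)$; your reading of the misprinted $d_i$ as $a_i$ is also the intended one.
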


Now we prove an analogue of [\cite{ud}, Lemma 2.13.].
\begin{lemma}\label{ba}
Let $\alpha,\beta \in \mathcal{ODCT}_n$. If $\alpha\in J^*(\beta)$ then Im$~\alpha \subseteq \textnormal{Im}~\beta$.
\end{lemma}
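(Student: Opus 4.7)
The plan is to apply Lemma \ref{j1} to unfold the hypothesis $\alpha\in J^*(\beta)$ into a finite chain $\beta=a_0,a_1,\ldots,a_k=\alpha$ in $\mathcal{ODCT}_n$ together with multipliers $x_1,\ldots,x_k,y_1,\ldots,y_k\in \mathcal{ODCT}_n^1$ satisfying $(a_i,x_ia_{i-1}y_i)\in\mathcal{D}^*$ for each $i$. It then suffices, by induction on $i$ and transitivity of set inclusion, to prove the one-step statement $\textnormal{Im}~a_i\subseteq\textnormal{Im}~a_{i-1}$. (Note $\mathcal{ODCT}_n^1=\mathcal{ODCT}_n$ because the identity map is trivially order-preserving and order-decreasing, so each $x_ia_{i-1}y_i$ genuinely lives in $\mathcal{ODCT}_n$.)

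For the inductive step I would combine two ingredients. First, Theorem \ref{star}(iv) gives $\mathcal{D}^*=\mathcal{L}^*$ on $\mathcal{ODCT}_n$, and Theorem \ref{star}(i) then yields $\textnormal{Im}~a_i=\textnormal{Im}(x_ia_{i-1}y_i)$. Second, for any transformations whatsoever $|\textnormal{Im}(x_ia_{i-1}y_i)|\leq|\textnormal{Im}~a_{i-1}|$, since composing on the left restricts the set of inputs fed into $a_{i-1}y_i$ and composing on the right can only merge points of $\textnormal{Im}(x_ia_{i-1})$. Putting these two ingredients together gives $|\textnormal{Im}~a_i|\leq|\textnormal{Im}~a_{i-1}|$.

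At this point Lemma \ref{nj} supplies the decisive structural fact: every element of $\mathcal{ODCT}_n$ has image of the form $\{1,2,\ldots,p\}$, i.e.\ an initial segment of $[n]$. Initial segments are totally ordered by inclusion and each is determined by its cardinality, so the cardinality bound $|\textnormal{Im}~a_i|\leq|\textnormal{Im}~a_{i-1}|$ automatically upgrades to $\textnormal{Im}~a_i\subseteq\textnormal{Im}~a_{i-1}$, completing the induction. I do not anticipate any real obstacle here: the argument is little more than a bookkeeping exercise that plugs the $J^*$-chain of Lemma \ref{j1} into the $\mathcal{L}^*$-description of Theorem \ref{star} and then invokes the initial-segment shape of images from Lemma \ref{nj}; the only subtle point is noting that $\mathcal{ODCT}_n$ has an identity so that the $S^1$ adjunction is harmless.
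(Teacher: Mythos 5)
Your proposal is correct and follows essentially the same route as the paper: unfold $\alpha\in J^*(\beta)$ via Lemma \ref{j1} into a chain, apply Theorem \ref{star} to get $\textnormal{Im}~a_i=\textnormal{Im}(x_ia_{i-1}y_i)$, and chain the inclusions. The only difference is that the paper simply asserts $\textnormal{Im}(x_ia_{i-1}y_i)\subseteq\textnormal{Im}~a_{i-1}$ (which is not true for arbitrary transformations, since right multiplication can move the image), whereas you justify it properly via the cardinality bound together with the initial-segment form of images from Lemma \ref{nj} — a small but genuine improvement in rigor.
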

\begin{proof}
 Let $\alpha\in \mathcal{J}^*(\beta)$, ($\alpha,\beta \in \mathcal{ODCT}_n$). Then by Lemma \ref{j1}, there are elements $\beta_0, \beta_1, \ldots,\beta_n \in \mathcal{ODCT}_n$, $\lambda_1,\lambda_2,\ldots,\lambda_n,$ $\mu_1,\mu_2,\ldots,\mu_n \in \mathcal{ODCT}_n^{1}$ such that $\beta = \beta_0,$ $\alpha = \beta_n$ and $(\beta_i, \lambda_i\beta_{i-1}\mu_i)\in \mathcal{D}^*$ for $i = 1,2,\ldots,n.$ Thus by Theorem \ref{star}(iv), $\textnormal{Im }\beta_i = \textnormal{Im }\lambda_i\beta_{i-1}\mu_i\subseteq \textnormal{Im }\beta_{i-1}$ for all $i = 1,2,\ldots,n$. This implies that $\textnormal{Im }\alpha \subseteq \textnormal{Im }\beta$.
\end{proof}

\begin{lemma}\label{ca}
On the semigroup $\mathcal{ODCT}_n$, $\mathcal{D}^* = \mathcal{J}^*$.
\end{lemma}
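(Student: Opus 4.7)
The plan is to show the two inclusions separately. The inclusion $\mathcal{D}^* \subseteq \mathcal{J}^*$ is automatic, since $\mathcal{J}^*$ is by construction the smallest congruence containing $\mathcal{D}^*$ (and in particular contains $\mathcal{D}^*$ itself). So the entire content lies in proving $\mathcal{J}^* \subseteq \mathcal{D}^*$.

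For this, I would take $(\alpha,\beta) \in \mathcal{J}^*$, i.e.\ $\alpha \in J^*(\beta)$ and $\beta \in J^*(\alpha)$. Apply Lemma \ref{ba} in both directions to obtain $\textnormal{Im }\alpha \subseteq \textnormal{Im }\beta$ and $\textnormal{Im }\beta \subseteq \textnormal{Im }\alpha$, hence $\textnormal{Im }\alpha = \textnormal{Im }\beta$. By Theorem \ref{star}(i) this gives $(\alpha,\beta) \in \mathcal{L}^*$, and by Theorem \ref{star}(iv) we have $\mathcal{L}^* = \mathcal{D}^*$, so $(\alpha,\beta) \in \mathcal{D}^*$. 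This closes the chain $\mathcal{J}^* \subseteq \mathcal{L}^* = \mathcal{D}^* \subseteq \mathcal{J}^*$.

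There is no real obstacle here: the heavy lifting has already been done in Lemma \ref{ba} (which reduces $\mathcal{J}^*$-comparability to the image inclusion) and in Theorem \ref{star} (which identifies $\mathcal{L}^*$ with equality of images and $\mathcal{D}^*$ with $\mathcal{L}^*$). The proof is essentially a two-line assembly of these facts, with the only thing to be careful about being that one must invoke Lemma \ref{ba} symmetrically in both directions since $\mathcal{J}^*$-relatedness requires each element to lie in the principal $*$-ideal of the other.
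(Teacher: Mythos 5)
Your proposal is correct and follows essentially the same route as the paper: both inclusions handled the same way, with the nontrivial one obtained by applying Lemma \ref{ba} symmetrically and then invoking Theorem \ref{star}. One small caveat: $\mathcal{J}^*$ is not usually defined as ``the smallest congruence containing $\mathcal{D}^*$'' but via equality of principal $*$-ideals ($\alpha\,\mathcal{J}^*\,\beta$ iff $J^*(\alpha)=J^*(\beta)$); the containment $\mathcal{D}^*\subseteq\mathcal{J}^*$ is still standard (each $J^*(a)$ is saturated by $\mathcal{L}^*$ and $\mathcal{R}^*$, hence by their join), and the rest of your argument uses the correct definition, so nothing downstream is affected.
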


\begin{proof}
Let $\alpha,\beta \in \mathcal{ODCT}_n$. Notice that $\mathcal{D}^*\subseteq \mathcal{J}^*$. Thus we only need to show  that $\mathcal{J}^*\subseteq \mathcal{D}^*$. Now let $(\alpha,\beta)\in \mathcal{J}^*$, i.e., $\alpha \in \mathcal{J}^*(\beta)$ and $\beta\in \mathcal{J}^*(\alpha).$ Thus by Lemma \ref{ba}, $\textnormal{Im }\alpha \subseteq \textnormal{Im }\beta$ and $\textnormal{Im }\beta \subseteq \textnormal{Im }\alpha$, as such  $\textnormal{Im }\alpha = \textnormal{Im }\beta$.      Therefore by Theorem \ref{star} $(\alpha,\beta) \in \mathcal{D}^*$, as required.
\end{proof}

 Now we show in the  lemma below that $\mathcal{ODCT}_n$ is left abundant.
\begin{lemma}\label{la}
The semigroup $\mathcal{ODCT}_n$  is left abundant.
\end{lemma}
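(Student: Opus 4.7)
The plan is to apply the characterisation of $\mathcal{L}^\ast$-classes given in Theorem~\ref{star}(i): two elements of $\mathcal{ODCT}_n$ are $\mathcal{L}^\ast$-related precisely when they share the same image. Lemma~\ref{nj} shows that every $\alpha\in\mathcal{ODCT}_n$ has image of the form $\{1,2,\ldots,p\}$ for some unique $p\in\{1,\ldots,n\}$, so the $\mathcal{L}^\ast$-classes of $\mathcal{ODCT}_n$ are indexed by $p$. Proving left abundance therefore reduces to exhibiting, for each $p$, a single idempotent in $\mathcal{ODCT}_n$ whose image is $\{1,\ldots,p\}$.

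My candidate is the map
$$\epsilon_p=\left(\begin{array}{ccccc}\{1\}&\{2\}&\cdots&\{p-1\}&\{p,p+1,\ldots,n\}\\ 1&2&\cdots&p-1&p\end{array}\right),$$
which fixes $1,2,\ldots,p-1$ and collapses the tail $\{p,p+1,\ldots,n\}$ to $p$. I would verify in turn that (a) $\epsilon_p$ is order-preserving, immediate from the displayed form; (b) $\epsilon_p$ is order-decreasing, since $k\mapsto\min(k,p)\leq k$ for each $k\in[n]$; and (c) $\epsilon_p$ is a contraction, where the only case needing attention is $i<p\leq j$, for which $|i\epsilon_p-j\epsilon_p|=|i-p|\leq|i-j|$ follows directly from $p\leq j$. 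Hence $\epsilon_p\in\mathcal{ODCT}_n$, and idempotency is then immediate because $F(\epsilon_p)=\{1,\ldots,p\}=\textnormal{Im}\,\epsilon_p$.

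With $\epsilon_p$ in hand, I finish by noting that for any $\alpha\in\mathcal{ODCT}_n$ with $|\textnormal{Im}\,\alpha|=p$, Lemma~\ref{nj} forces $\textnormal{Im}\,\alpha=\{1,\ldots,p\}=\textnormal{Im}\,\epsilon_p$, so $(\alpha,\epsilon_p)\in\mathcal{L}^\ast$ by Theorem~\ref{star}(i). Thus every $\mathcal{L}^\ast$-class of $\mathcal{ODCT}_n$ meets $E(\mathcal{ODCT}_n)$, giving the left abundance. I do not anticipate any serious obstacle: the entire argument hinges on producing one canonical idempotent per admissible image, and the only mildly delicate step is the contraction check for $\epsilon_p$, which amounts to the short case split sketched above.
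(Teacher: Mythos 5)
Your proof is correct and follows essentially the same route as the paper: both exhibit the canonical idempotent $\epsilon_p$ with image $\{1,\ldots,p\}$ and invoke Theorem~\ref{star}(i) to place it in the $\mathcal{L}^\ast$-class of any $\alpha$ of rank $p$. The only difference is that you spell out the verification that $\epsilon_p$ lies in $\mathcal{ODCT}_n$ and is idempotent, which the paper simply declares to be clear.
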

\begin{proof} Let $\alpha \in \mathcal{ODCT}_n$ be as expressed in equation \eqref{ab} and let $L^*_\alpha$ be an $\mathcal{L}^*$-class of $\alpha$ in $\mathcal{ODCT}_n$. Denote $$ \epsilon=\left( \begin{array}{cccc}
            1 & 2 & \ldots& \{p, p+1, \ldots,n\} \\
            1 & 2 & \ldots & p
          \end{array} \right) \in \mathcal{ODCT}_n, ~ (1\leq p\leq n).$$ \noindent It is clear that $\epsilon$ is an idempotent in $\mathcal{ODCT}_n$,  moreover $\textnormal{Im }\alpha=\textnormal{Im }\epsilon$ and so by  Theorem \ref{star}(i)  we see that $(\alpha,\epsilon) \in\mathcal{L}^*$ which means  $\epsilon \in{L}^*_\alpha.$ Since $L^*_\alpha$ is an arbitrary $\mathcal{L}^*$-class of $\alpha$ in $\mathcal{ODCT}_n$, then $\mathcal{ODCT}_n$ is left abundant, as required.
\end{proof}

\begin{remark}
In contrast with [\cite{dr}, Lemma 1.20], the semigroup $\mathcal{ODCT}_n$  is not right abundant for $n\geq 3$.

For a counterexample, consider $\alpha =\left( \begin{array}{cc}
            \{1,2\}& 3 \\
            1 & 2
          \end{array} \right) \in \mathcal{ODCT}_3$.  It is  clear that  $$R^*_\alpha =\left\{\left( \begin{array}{cc}
    \{1,2\}& 3 \\
            1 & 2
 \end{array} \right)\right\}$$ \noindent  has no idempotent.

 However, the semigroup $\mathcal{ODCT}_n$ is right abundant for $1\leq n\leq 2$ which is also in contrast with [\cite{dr}, Remark 1.21].
\end{remark}

\section{Rank of $\mathcal{ODCT}_n$}
Let $S$ be a semigroup and $A$ be any nonempty subset of $S$. The subsemigroup generated by $A$  is the smallest subsemigroup of $S$ containing $A$ and  is denoted by $\langle A \rangle.$ If there exists a finite subset $A$ of a semigroup $S$ with $\langle A\rangle  = S$, then $S$ is said to be a \emph{finitely generated semigroup}. The rank of a finitely generated semigroup $S$ is defined by
$$ \textnormal{ rank}(S)  = \min \{\vert{A} \vert :\langle A\rangle = S\}.$$ \noindent The ranks of many semigroups of transformations have been investigated over the years by many authors,  see for example \cite{gu,gur1,GH,ayik,u}. In particular, Kemal \cite{kt} obtained the ranks of the semigroups $\mathcal{OCT}_n$ and $\mathcal{ORCT}_n$, respectively. This study was extended to obtain the ranks  of the two sided ideals of $\mathcal{OCT}_n$ and $\mathcal{ORCT}_n$, respectively by Leyla \cite{leyla}. However, the rank of $\mathcal{ODCT}_n$ does not seem to have been investigated and in this section we investigate it.

Now let $\mathcal{ORCT}_{n}$ denote the semigroup of all order preserving or order reversing full contractions, and let $\textnormal{Reg}(\mathcal{ORCT}_{n})$ be the collection of regular elements of  $\mathcal{ORCT}_{n}$. Then, we first note the following result about idempotents in $\mathcal{ORCT}_{n}$ from \cite{am}.

\begin{lemma}[\cite{am}, Lemma 13]\label{mmm} Let $\epsilon$ be an idempotent element in $(\mathcal{ORCT}_{n})$.  Then $\epsilon$ can be expressed as $$\left(\begin{array}{cccccc}
                                                                            \{1,\ldots,a+1\} & a+2 & a+3 & \ldots & a+p-1 & \{a+p,\ldots,n\} \\
                                                                            a+1 & a+2 & a+3 & \ldots & a+p-1 & a+ p
                                                                          \end{array}
\right).$$
\end{lemma}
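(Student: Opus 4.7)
The plan is to reduce to the order-preserving case and then apply the canonical form (4) together with the idempotency condition $\textnormal{Im}~\epsilon = F(\epsilon)$ to collapse each interior block to a singleton via the contraction inequality.

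First I would dispense with the order-reversing case. If $\epsilon \in \mathcal{ORCT}_n$ is order-reversing but not order-preserving, then $\epsilon^2$ is order-preserving as a composition of two order-reversing maps; since $\epsilon^2 = \epsilon$, this forces $\epsilon$ to be simultaneously order-preserving and order-reversing, hence constant. Any constant map matches the claimed form in the degenerate case $p = 1$, in which the two boundary blocks $\{1,\ldots,a+1\}$ and $\{a+p,\ldots,n\}$ coalesce into $[n]$. So I may assume $\epsilon \in \mathcal{OCT}_n$.

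Now writing $\epsilon$ in the canonical form (4) with blocks $A_1 < \ldots < A_p$ and images $x+1 < \ldots < x+p$, idempotency ($\textnormal{Im}~\epsilon = F(\epsilon)$) gives $x+i \in A_i$ for each $i$. The decisive step is a two-sided squeeze: the contraction property applied to consecutive blocks yields $\min A_i - \max A_{i-1} \geq 1$, while $x+i-1 \in A_{i-1}$ and $x+i \in A_i$ give $\min A_i - \max A_{i-1} \leq (x+i) - (x+i-1) = 1$. Equality throughout forces $\min A_i = x+i$ and $\max A_{i-1} = x+i-1$ for $2 \leq i \leq p$, collapsing $A_i$ to the singleton $\{x+i\}$ for every interior index $2 \leq i \leq p-1$. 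Combined with $1 \in A_1$ and $n \in A_p$, this gives $A_1 = \{1,\ldots,x+1\}$ and $A_p = \{x+p,\ldots,n\}$; setting $a = x$ produces the claimed form.

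The hardest part, I expect, is noticing that both sides of the squeeze inequality are available simultaneously — the lower bound from the contraction property on adjacent blocks and the upper bound from idempotency pinning $x+i$ inside $A_i$. Once these two bounds are aligned, the calculation is an essentially mechanical one-line inequality at each interior index, and the block structure of $\epsilon$ is forced.
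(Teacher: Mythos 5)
Your proof is correct. Note that the paper does not actually prove this statement --- it imports it verbatim from \cite{am} (Lemma 13) --- so there is no in-paper argument to compare against; your write-up is a valid self-contained derivation. The reduction of the order-reversing case is sound (an order-reversing idempotent satisfies $\epsilon=\epsilon^2$ with $\epsilon^2$ order-preserving, so that case collapses to constant maps, which fit the $p=1$ degenerate form), and the two-sided squeeze is exactly the right mechanism: idempotency gives $x+i\in A_i$ via $\textnormal{Im}\,\epsilon=F(\epsilon)$, so $\min A_i-\max A_{i-1}\leq 1$, while the ordered partition of $[n]$ into blocks gives $\min A_i-\max A_{i-1}\geq 1$, forcing $\min A_i=x+i$ and $\max A_{i-1}=x+i-1$ and hence singleton interior blocks and interval end blocks $\{1,\ldots,x+1\}$ and $\{x+p,\ldots,n\}$.
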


We now prove the following lemma which is crucial to the main result.
\begin{lemma}\label{2}
Every  $\epsilon \in E( \mathcal {ODCT}_n)$ can be expressed as
$$ \epsilon=\left( \begin{array}{cccc}
            1 & 2 & \ldots& \{p, p+1, \ldots,n\} \\
            1 & 2 & \ldots & p
          \end{array} \right)~~ (1\leq p\leq n).$$ \end{lemma}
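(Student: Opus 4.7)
The plan is to start from the canonical form for elements of $\mathcal{ODCT}_n$ supplied by Lemma \ref{nj}, then pin down each block using, in turn, idempotency, the order-decreasing property, and the order-preserving property.

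Let $\epsilon \in E(\mathcal{ODCT}_n)$. By Lemma \ref{nj}, I may write
$$ \epsilon=\left( \begin{array}{cccc}
            A_1 & A_2 & \ldots & A_p \\
            1 & 2 & \ldots & p
          \end{array} \right).$$
Since $\epsilon^2 = \epsilon$, the characterisation $\textnormal{Im}~\epsilon = F(\epsilon)$ for idempotent full transformations gives $i\epsilon = i$ for every $i \in \{1,\ldots,p\}$. Equivalently, $i \in A_i$ for each $i=1,\ldots,p$. This step pins each block to the diagonal.

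Next I bound each $A_i$ using the order-decreasing property: for any $a \in A_i$, we have $i = a\epsilon \leq a$, so $A_i \subseteq \{i, i+1, \ldots, n\}$. For $i < p$, the order-preserving inequality $A_i < A_{i+1}$ together with $i+1 \in A_{i+1}$ forces every $a \in A_i$ to satisfy $a < i+1$; combining with the previous lower bound yields $A_i = \{i\}$ for each $i=1,\ldots,p-1$. Finally, since $\{A_1,\ldots,A_p\}$ partitions $[n]$ and $A_1 \cup \cdots \cup A_{p-1} = \{1,\ldots,p-1\}$, the last block is forced to be $A_p = \{p, p+1, \ldots, n\}$, giving the stated expression.

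There is no serious obstacle; the proof is essentially a bookkeeping argument assembling idempotency, order-decreasing, and order-preservation. As a quicker alternative, one could observe that $E(\mathcal{ODCT}_n) \subseteq E(\mathcal{ORCT}_n)$ and apply Lemma \ref{mmm}, in which the parameter $a$ must satisfy $a+1 \leq 1$ (order-decreasing at the point $1$), forcing $a=0$ and recovering the stated form immediately.
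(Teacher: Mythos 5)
Your proposal is correct. Your primary argument is a genuinely different (and more self-contained) route than the paper's: the paper disposes of this lemma in one line by combining Lemma \ref{mmm} — the classification of idempotents in the larger semigroup $\mathcal{ORCT}_n$, imported from an external reference — with the canonical form of Lemma \ref{nj}, without even spelling out how the parameter in Lemma \ref{mmm} gets forced to zero. You instead derive everything from first principles: idempotency via $\textnormal{Im}~\epsilon = F(\epsilon)$ pins $i \in A_i$, the order-decreasing condition gives $A_i \subseteq \{i,\ldots,n\}$, and the block ordering $A_i < A_{i+1}$ with $i+1 \in A_{i+1}$ squeezes $A_i = \{i\}$ for $i < p$, leaving $A_p = \{p,\ldots,n\}$ by the partition property. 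This buys independence from the external Lemma \ref{mmm} and makes the mechanism visible, at the cost of a few more lines. Your closing alternative — applying Lemma \ref{mmm} and using order-decreasingness at the point $1$ to force $a+1 \leq 1$, hence $a = 0$ — is essentially the paper's own proof, with the missing detail filled in. Both arguments are sound.
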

\begin{proof} The proof follows from Lemma \ref{mmm} and Lemma \ref{nj}.
\end{proof}
We show in the next theorem that, the collection of all idempotents in $\mathcal{ODCT}_{n}$ i.e., $ E(\mathcal{ODCT}_{n})$ is a semilattice.

\begin{theorem}\label{sl}
$E (\mathcal {ODCT}_n)$ is a semilattice.
\end{theorem}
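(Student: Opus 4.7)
My plan is to use the explicit parametrization of idempotents provided by Lemma \ref{2}, and to verify directly that the product of any two such idempotents lies in $E(\mathcal{ODCT}_n)$ and that multiplication is commutative. These two facts together say that $E(\mathcal{ODCT}_n)$ is a commutative band, i.e.\ a semilattice.

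First, I would label the idempotents. By Lemma \ref{2}, every $\epsilon \in E(\mathcal{ODCT}_n)$ is of the form
$$\epsilon_p = \left(\begin{array}{cccc} 1 & 2 & \ldots & \{p,p+1,\ldots,n\} \\ 1 & 2 & \ldots & p \end{array}\right), \qquad 1 \le p \le n,$$
so as a function $\epsilon_p(i) = i$ for $i < p$ and $\epsilon_p(i) = p$ for $i \ge p$. The set $E(\mathcal{ODCT}_n) = \{\epsilon_1,\epsilon_2,\ldots,\epsilon_n\}$ is therefore in bijection with $[n]$.

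Next, I would compute $\epsilon_p \epsilon_q$ for arbitrary $p,q \in [n]$ and show it equals $\epsilon_{\min(p,q)}$. Without loss of generality assume $p \le q$. For $\epsilon_p \epsilon_q(i) = \epsilon_q(\epsilon_p(i))$: if $i < p$ then $\epsilon_p(i) = i < p \le q$, hence $\epsilon_q(i) = i$; and if $i \ge p$ then $\epsilon_p(i) = p \le q$, hence $\epsilon_q(p) = p$. Thus $\epsilon_p \epsilon_q = \epsilon_p$. A symmetric computation for $\epsilon_q \epsilon_p(i) = \epsilon_p(\epsilon_q(i))$ gives: for $i < p \le q$, $\epsilon_q(i) = i$ and $\epsilon_p(i) = i$; for $p \le i < q$, $\epsilon_q(i) = i$ and $\epsilon_p(i) = p$; for $i \ge q \ge p$, $\epsilon_q(i) = q$ and $\epsilon_p(q) = p$. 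So $\epsilon_q \epsilon_p = \epsilon_p$ as well. Hence in every case $\epsilon_p \epsilon_q = \epsilon_q \epsilon_p = \epsilon_{\min(p,q)}$.

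Finally, I would conclude: the identity $\epsilon_p \epsilon_q = \epsilon_{\min(p,q)} \in E(\mathcal{ODCT}_n)$ shows that $E(\mathcal{ODCT}_n)$ is a subsemigroup of $\mathcal{ODCT}_n$ consisting entirely of idempotents, and the symmetry $\epsilon_p \epsilon_q = \epsilon_q \epsilon_p$ shows it is commutative, so $E(\mathcal{ODCT}_n)$ is a semilattice (in fact isomorphic to the meet-semilattice $([n],\min)$). There is no genuine obstacle here; the only step requiring any care is the case split in the computation of $\epsilon_q \epsilon_p$ when $p \le q$, which must treat the three ranges $i < p$, $p \le i < q$, and $i \ge q$ separately.
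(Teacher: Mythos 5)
Your proof is correct and follows essentially the same route as the paper: both use the explicit form of the idempotents from Lemma \ref{2} and verify by direct computation that $\epsilon_p\epsilon_q=\epsilon_q\epsilon_p=\epsilon_{\min(p,q)}$. Your version is slightly more explicit in the pointwise case analysis, but there is no substantive difference.
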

\begin{proof}
  Let $\epsilon,\eta\in E(\mathcal {OCDT}_n)$.  Then by Lemma \ref{2}, we may denote $\epsilon$ and $\eta$ by
$$ \epsilon =\left( \begin{array}{cccc}
            1 & 2 & \ldots& \{k, k+1, \ldots,n\} \\
            1 & 2 & \ldots&  k
          \end{array} \right) \textnormal{ and } \eta =\left( \begin{array}{cccc}
            1 & 2 & \ldots & \{p, p+1, \ldots,n\} \\
            1 & 2 & \ldots & p
          \end{array} \right) ~(1\leq k, p\leq n). $$
\noindent  Thus we have two cases to consider:

 If $k\leq p$. Then $$\epsilon\eta=\left( \begin{array}{cccc}
            1 & 2 & \ldots & \{k, k+1, \ldots,n\} \\
            1 & 2 & \ldots & k
          \end{array} \right)= \eta\epsilon = \epsilon \in E(\mathcal {ODCT}_n).$$ \noindent If $p< k$. Then $$\epsilon\eta=\left( \begin{array}{cccc}
            1 & 2 & \ldots & \{p, p+1, \ldots,n\} \\
            1 & 2 & \ldots & p
          \end{array} \right)=  \eta\epsilon = \eta \in E(\mathcal {ODCT}_n).$$ \noindent Thus $E(\mathcal {ODCT}_n)$ is a semilattice.
\end{proof}

Now by Theorem \ref{sl} and Lemma \ref{la}, we readily have the following result.
\begin{theorem} Let $\mathcal{ODCT}_n$ be as defined in equation \eqref{od}. Then $\mathcal{ODCT}_n$ is left adequate.
\end{theorem}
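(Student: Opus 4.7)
The plan is to exploit the two pieces of machinery the paper has already built: Lemma~\ref{la}, which guarantees that every $\mathcal{L}^{\ast}$-class of $\mathcal{ODCT}_n$ meets $E(\mathcal{ODCT}_n)$, and Theorem~\ref{sl}, which says $E(\mathcal{ODCT}_n)$ is a commutative band, i.e.\ a semilattice. Recall that in the convention used here a semigroup is left adequate when it is left abundant and its idempotents form a semilattice (equivalently, when every $\mathcal{L}^{\ast}$-class contains a unique idempotent and $E(S)$ is commutative). Thus the statement is essentially the conjunction of Lemma~\ref{la} and Theorem~\ref{sl}, and the ``proof'' is really a citation.

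The one point that deserves a line of verification is the \emph{uniqueness} of the idempotent in each $\mathcal{L}^{\ast}$-class, if one wishes to invoke that flavour of the definition. First I would appeal to Lemma~\ref{2}: every idempotent of $\mathcal{ODCT}_n$ has the canonical form
$$\epsilon_p=\left(\begin{array}{cccc} 1 & 2 & \ldots & \{p,p+1,\ldots,n\} \\ 1 & 2 & \ldots & p\end{array}\right),\qquad 1\le p\le n,$$
so $\textnormal{Im}\,\epsilon_p=\{1,2,\ldots,p\}$. Since distinct values of $p$ give distinct images, Theorem~\ref{star}(i) shows that $\epsilon_p$ and $\epsilon_q$ lie in distinct $\mathcal{L}^{\ast}$-classes whenever $p\neq q$. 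Combined with Lemma~\ref{la}, which exhibits such an $\epsilon_p$ inside each $L^{\ast}_{\alpha}$, this gives exactly one idempotent per $\mathcal{L}^{\ast}$-class.

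Putting everything together, I would write: by Lemma~\ref{la} each $\mathcal{L}^{\ast}$-class of $\mathcal{ODCT}_n$ contains an idempotent, which is unique by the observation above; by Theorem~\ref{sl} the set $E(\mathcal{ODCT}_n)$ is a semilattice; hence $\mathcal{ODCT}_n$ is left adequate. There is no real obstacle here because the hard work, namely the characterization of $\mathcal{L}^{\ast}$ via images (Theorem~\ref{star}(i)), the normal form of idempotents (Lemma~\ref{2}), and the commutativity computation in Theorem~\ref{sl}, has already been carried out.
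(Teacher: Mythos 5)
Your proposal is correct and follows the paper's own route exactly: the paper deduces the theorem directly from Lemma~\ref{la} (left abundance) and Theorem~\ref{sl} (the idempotents form a semilattice). The extra paragraph verifying uniqueness of the idempotent in each $\mathcal{L}^{\ast}$-class via Lemma~\ref{2} and Theorem~\ref{star}(i) is a sound and harmless addition, but not needed under the definition of adequate the paper uses.
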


Next, we state the following well known result from \cite{jd} as a lemma below.
\begin{lemma}\label{dj} In a finite $\mathcal{J}$ trivial semigroup $S$, every minimal generating set is  (unique) minimum.

\end{lemma}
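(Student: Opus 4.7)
The plan is to identify the unique minimum generating set as the set $P = \{a \in S : a \notin \langle S \setminus \{a\}\rangle\}$ of \emph{primitive} elements, and then show that every minimal generating set must coincide with $P$. I proceed in three steps: (a) every generating set contains $P$; (b) $P$ itself generates $S$; (c) every minimal generating set equals $P$.

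Step (a) is essentially the definition. If $a \in P$ and $X \subseteq S$ generates $S$ with $a \notin X$, then $X \subseteq S \setminus \{a\}$, so $a \in \langle X\rangle \subseteq \langle S \setminus \{a\}\rangle$, contradicting primitivity of $a$.

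Step (b) is the heart of the argument and is where the $\mathcal{J}$-triviality is used essentially. Recall the $\mathcal{J}$-quasiorder $a \leq_{\mathcal{J}} b \Leftrightarrow a \in S^{1} b S^{1}$; in a finite $\mathcal{J}$-trivial semigroup this is a genuine partial order, and any factorisation $a = xy$ yields $a \leq_{\mathcal{J}} x$, with equality forcing $x = a$ by $\mathcal{J}$-triviality. Let $h(a)$ denote the length of the longest strict ascending chain $a <_{\mathcal{J}} a_{1} <_{\mathcal{J}} \cdots <_{\mathcal{J}} a_{k}$ in $S$; this is a finite non-negative integer since $S$ is finite. I argue that $a \in \langle P\rangle$ by induction on $h(a)$. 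For the base case $h(a) = 0$, the element $a$ is $\mathcal{J}$-maximal, so any decomposition $a = b_{1}\cdots b_{n}$ with $n \geq 2$ and every $b_{i} \neq a$ would give $a <_{\mathcal{J}} b_{1}$ strictly, contradicting maximality; hence $a \in P \subseteq \langle P\rangle$. For the inductive step, suppose $a \notin P$ and write $a = b_{1}\cdots b_{n}$ with $n \geq 2$ and every $b_{i} \in S \setminus \{a\}$; then $a \leq_{\mathcal{J}} b_{i}$, and the equality $a = b_{i}$ would follow from $\mathcal{J}$-triviality if this were not strict, so in fact $a <_{\mathcal{J}} b_{i}$ strictly and $h(b_{i}) < h(a)$. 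The inductive hypothesis then yields each $b_{i} \in \langle P\rangle$ and therefore $a \in \langle P\rangle$.

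For step (c), let $A$ be any minimal generating set. Step (a) gives $P \subseteq A$, and step (b) gives that $P$ is itself a generating set. The minimality of $A$ therefore forces $A = P$. Consequently $P$ is the unique minimal generating set, and \emph{a fortiori} the unique minimum one. The principal obstacle is step (b), precisely the observation that in a $\mathcal{J}$-trivial semigroup any proper factorisation strictly raises $\leq_{\mathcal{J}}$; once this is packaged as a height induction, the remaining verifications are purely formal.
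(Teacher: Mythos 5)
Your proof is correct and complete. Note that the paper itself gives no argument for this lemma: it is quoted verbatim as a known result of Doyen \cite{jd}, so there is nothing internal to compare against, and what you have supplied is a genuine self-contained proof of the cited fact. Your route is the standard one for $\mathcal{J}$-trivial semigroups: you take $P=\{a\in S: a\notin\langle S\setminus\{a\}\rangle\}$, observe that every generating set must contain $P$ (step (a) is purely formal), and then use the key structural fact that in a finite $\mathcal{J}$-trivial semigroup the preorder $\leq_{\mathcal{J}}$ is antisymmetric, so that any factorisation $a=b_1\cdots b_n$ with every $b_i\neq a$ forces $a<_{\mathcal{J}}b_i$ strictly; the induction on the height $h(a)$ then shows $\langle P\rangle=S$, and minimality of an arbitrary generating set $A\supseteq P$ forces $A=P$. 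The small points all check out: when $a\notin P$ the expression of $a$ as a product over $S\setminus\{a\}$ necessarily has length $n\geq 2$, strictness of $a<_{\mathcal{J}}b_i$ is exactly where $\mathcal{J}$-triviality enters, and finiteness of $S$ is what makes $h$ well defined. Your argument in fact delivers slightly more than the bare statement, namely an explicit description of the unique minimum generating set as $P$; this is consistent with (and would streamline) the way the authors later verify that $G_{n-1}$ is a minimal, hence the minimum, generating set of $K_{n-1}$.
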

Let $G_p = \{\alpha \in \mathcal {ODCT}_n : \vert{\textnormal{Im}~\alpha}\vert = p\}$ and $K_{p} = \{\alpha \in \mathcal {ODCT}_n : |\textnormal{Im}~\alpha| \leq p\}$. It is worth noting that $K_p = G_1 \cup G_2 \cup \ldots \cup G_p$ $(1 \leq p \leq n)$. Now we have the following lemma.

\begin{lemma}\label{above1}
For  $1\leq p\leq n-2$,  $G_p \subseteq \langle G_{p+1}\rangle$.
\end{lemma}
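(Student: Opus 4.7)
The plan is to show that every $\alpha\in G_p$ admits a factorization $\alpha=\beta\gamma$ with $\beta,\gamma\in G_{p+1}$. By Lemma~\ref{nj} we may write $\alpha=\left(\begin{array}{cccc} A_1 & A_2 & \ldots & A_p \\ 1 & 2 & \ldots & p \end{array}\right)$; since $\sum_{j=1}^{p}|A_j|=n\geq p+2$, at least one block $A_i$ satisfies $|A_i|\geq 2$. The idea is to split $A_i$ into two consecutive subblocks inside $\beta$ so as to reach image size $p+1$, then use $\gamma$ to merge the corresponding pair of image values back together.

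Explicitly, I would set $A_i'=\{\min A_i\}$, $A_i''=A_i\setminus A_i'$, and define
$$\beta=\left(\begin{array}{cccccccc}
A_1 & \ldots & A_{i-1} & A_i' & A_i'' & A_{i+1} & \ldots & A_p \\
1 & \ldots & i-1 & i & i+1 & i+2 & \ldots & p+1
\end{array}\right),$$
$$\gamma=\left(\begin{array}{cccccccc}
1 & \ldots & i-1 & \{i,i+1\} & i+2 & \ldots & p+1 & \{p+2,\ldots,n\} \\
1 & \ldots & i-1 & i & i+1 & \ldots & p & p+1
\end{array}\right).$$
Both maps have image $\{1,\ldots,p+1\}$, so will lie in $G_{p+1}$ once they are shown to belong to $\mathcal{ODCT}_n$. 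Tracing $x\in A_j$ through the composition in the three cases $j<i$, $j=i$ (with the subcases $x\in A_i'$ versus $x\in A_i''$), and $j>i$ then yields $\beta\gamma=\alpha$.

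The routine parts of membership in $\mathcal{ODCT}_n$, namely order-preservation and the contraction property, are automatic since each of $\beta$ and $\gamma$ sends a partition of $[n]$ into consecutive intervals to consecutive integers (compare the paragraph following equation~\eqref{1}). The main obstacle is the order-decreasing condition for $\beta$ on the tail blocks $A_{i+1},\ldots,A_p$: for $j\geq 1$, $\beta$ sends $A_{i+j}$ to $i+1+j$, while order-decreasingness of $\alpha$ only guarantees $\min A_{i+j}\geq i+j$. This is where the choice $|A_i|\geq 2$ is essential: the blocks $A_1,\ldots,A_{i+j-1}$ together contain at least $(i-1)+2+(j-1)=i+j$ elements and cover $[1,\max A_{i+j-1}]$, so $\min A_{i+j}=\max A_{i+j-1}+1\geq i+j+1$, as required. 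Finally, the hypothesis $p\leq n-2$ enters exactly once, to ensure the last block $\{p+2,\ldots,n\}$ of $\gamma$ is non-empty so that $\gamma\in G_{p+1}$; the bound is sharp because $G_n=\{\mathrm{id}_{[n]}\}$ forces $\langle G_n\rangle$ to be trivial.
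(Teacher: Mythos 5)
Your proof is correct, and while it follows the same basic strategy as the paper --- factor $\alpha\in G_p$ as (a map that splits one kernel block, raising the rank to $p+1$) followed by (a map that merges the corresponding pair of image points back together) --- your choice of \emph{which} block to split is genuinely different and, as it happens, more robust. The paper always splits the last block $A_p$ into two nonempty pieces $A_p'\cup A_p''$, which tacitly assumes $|A_p|\geq 2$; this can fail, e.g.\ for $\alpha=\left(\begin{smallmatrix}\{1,2,3\} & 4\\ 1 & 2\end{smallmatrix}\right)\in G_2\subseteq\mathcal{ODCT}_4$ with $p=2\leq n-2$, where $A_2=\{4\}$ is a singleton. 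You instead locate \emph{some} block $A_i$ with $|A_i|\geq 2$ (which the pigeonhole count $n\geq p+2>p$ guarantees) and split that one. The price you pay is the extra verification that the resulting $\beta$ is still order-decreasing on the tail blocks $A_{i+1},\ldots,A_p$, whose images get shifted up by one --- the paper's choice avoids this issue entirely because splitting the last block shifts nothing --- and your counting argument ($A_1,\ldots,A_{i+j-1}$ are consecutive intervals containing at least $i+j$ points, so $\min A_{i+j}\geq i+j+1$) handles it cleanly. Your remaining observations (the contraction and order-preserving properties are automatic for convex-interval partitions mapped onto consecutive integers; $p\leq n-2$ is needed exactly so that the final block $\{p+2,\ldots,n\}$ of $\gamma$ is nonempty) match the paper's implicit reasoning. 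In short: same decomposition idea, but your version actually covers all of $G_p$, whereas the paper's written argument covers only those $\alpha$ whose last kernel class is non-singleton.
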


\begin{proof} Let $\alpha\in G_{p}$, then by Lemma \ref{nj} we may let
$\label{alf} \alpha=\left( \begin{array}{cccc}
            A_1 & A_2 & \ldots & A_p\\
            1 & 2 & \ldots & p
          \end{array} \right)$, where $1\leq p\leq n-2$. Next now let $A_{p}'\cup A_{p}''=A_{p}$  with $A_{p}'\neq\emptyset$,  $A_{p}''\neq\emptyset$, $A_{p}'\cap A_{p}''=\emptyset$ and $A_{p}'< A_{p}''$. Now denote  $\delta$  and  $\rho$ as :
$$\label{alf} \delta=\left( \begin{array}{cccccc}
            A_1 & A_2 & \ldots &A_{p-1}& A'_p &A''_p\\
            1 & 2 & \ldots &p-1&p&p+1
          \end{array} \right)\ \textnormal{ and }\rho = \left( \begin{array}{cccccc}
            1 & 2 & \ldots&p-1 &\{p, p+1\}&\{p+2,\ldots,n\} \\
            1 & 2 & \ldots&p-1 & p & p+1
          \end{array} \right).
$$\noindent Notice that $\delta,\rho \in G_{p+1}$. It is easy to see that $\alpha = \delta\rho \in \langle G_{p+1}\rangle.$ Hence $G_{p} \subseteq \langle G_{p+1}\rangle$.
 \end{proof}

 As a consequence we have the following Corollary.
\begin{corollary}\label{C}
For $1\leq r\leq {n-1}$,  $G_r \subseteq \langle G_{n-1}\rangle$.
\end{corollary}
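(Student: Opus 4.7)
The plan is to prove this by downward (reverse) induction on $r$, using Lemma \ref{above1} as the inductive step. The base case is trivial: when $r = n-1$ we have $G_{n-1} \subseteq \langle G_{n-1} \rangle$ by definition of the generated subsemigroup.

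For the inductive step, suppose the inclusion $G_{k} \subseteq \langle G_{n-1} \rangle$ holds for some $2 \leq k \leq n-1$; I want to deduce $G_{k-1} \subseteq \langle G_{n-1} \rangle$. Since $k - 1 \leq n - 2$, Lemma \ref{above1} (applied with $p = k-1$) gives $G_{k-1} \subseteq \langle G_{k} \rangle$. Combining with the induction hypothesis $G_k \subseteq \langle G_{n-1} \rangle$, and using the elementary fact that $A \subseteq \langle B \rangle$ implies $\langle A \rangle \subseteq \langle B \rangle$, we obtain $\langle G_k \rangle \subseteq \langle \langle G_{n-1}\rangle \rangle = \langle G_{n-1}\rangle$, whence $G_{k-1} \subseteq \langle G_{n-1}\rangle$, completing the induction.

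Iterating from $r = n-1$ down to $r = 1$ then gives $G_r \subseteq \langle G_{n-1}\rangle$ for every $1 \leq r \leq n-1$, as required. There is no real obstacle here; the content of the corollary is already contained in Lemma \ref{above1}, and the role of the proof is simply to chain together $n-1-r$ applications of that lemma. The only thing to be slightly careful about is the range condition: Lemma \ref{above1} requires $p \leq n-2$, which is exactly why the induction is valid for every step from $r = n-1$ down to $r = 1$ (each application uses an index $p = r \leq n-2$).
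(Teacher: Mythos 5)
Your proof is correct and follows essentially the same route as the paper: both arguments simply chain together successive applications of Lemma \ref{above1} to pass from $G_r$ up to $\langle G_{n-1}\rangle$; you merely formalize the chain as a downward induction (and, incidentally, handle the trivial case $r=n-1$ a little more cleanly than the paper does).
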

\begin{proof}
Suppose $1\leq r\leq {n-1},$ then by Lemma \ref{above1}, we see that $G_r \subseteq \langle G_{r+1}\rangle$ and similarly $G_{r+1} \subseteq \langle G_{r+2}\rangle$ which implies that $\langle G_{r}\rangle  \subseteq \langle G_{r+2}\rangle$. Therefore $G_r \subseteq \langle G_{r+1}\rangle  \subseteq \langle G_{r+2}\rangle$. If we continue in this fashion we see that
 $G_r \subseteq \langle G_{r+1}\rangle  \subseteq \langle G_{r+2}\rangle\subseteq \ldots \subseteq \langle G_{n-2}\rangle \subseteq \langle G_{n-1}\rangle$, as required.
\end{proof}

\begin{lemma} \label{order} In $\mathcal {ODCT}_n$,   $\vert  G_{n-1} \vert =n-1$.
\end{lemma}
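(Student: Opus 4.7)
The plan is a direct enumeration of the elements of $G_{n-1}$. Given any $\alpha \in G_{n-1}$, Lemma \ref{nj} lets me write
$$\alpha = \left( \begin{array}{cccc} A_1 & A_2 & \ldots & A_{n-1} \\ 1 & 2 & \ldots & n-1 \end{array} \right),$$
where $A_1 < A_2 < \ldots < A_{n-1}$ form a partition of $[n]$. The first observation to make is that because the blocks are totally ordered by $<$ and their union is $[n]$, each $A_l$ must consist of consecutive integers: otherwise some element of $[n]$ lying strictly between $\min A_l$ and $\max A_l$ would have to belong to a different block, contradicting the ordering.

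Since these $n-1$ nonempty blocks partition an $n$-element set, exactly one block has size $2$ and the remaining $n-2$ blocks are singletons. Letting $i$ denote the index of the size-$2$ block, a count of elements forces $A_l = \{l\}$ for $l < i$, $A_i = \{i, i+1\}$, and $A_l = \{l+1\}$ for $l > i$. Hence $\alpha$ is completely determined by the single parameter $i \in \{1, 2, \ldots, n-1\}$, which yields the upper bound $|G_{n-1}| \leq n-1$.

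For the matching lower bound I would verify that each choice of $i$ really yields an element of $\mathcal{ODCT}_n$. Order-preservation and the order-decreasing property are immediate from the block description, and the contraction condition reduces to checking consecutive inputs; the only nontrivial pair is $i$ and $i+1$, which both map to $i$, giving a jump of $0 \leq 1$. Thus the $n-1$ candidates are pairwise distinct and all valid, completing the count.

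I do not anticipate a serious obstacle, since the argument is essentially combinatorial bookkeeping. The only point worth writing out carefully is the interval structure of the kernel blocks, which is forced immediately by the strict ordering $A_1 < A_2 < \cdots < A_{n-1}$.
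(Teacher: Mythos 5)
Your proof is correct and follows essentially the same route as the paper: both identify that the $n-1$ ordered blocks of \textbf{Ker}$\,\alpha$ partitioning $[n]$ force exactly one doubleton block of consecutive integers $\{i,i+1\}$, so the elements of $G_{n-1}$ are in bijection with $i \in \{1,\ldots,n-1\}$. You simply spell out the interval structure of the blocks and the verification that each candidate lies in $\mathcal{ODCT}_n$, which the paper leaves implicit.
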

\begin{proof}
Notice that if $\alpha \in G_{n-1}$ then $\alpha $  is of the  form
$\label{alf} \alpha=\left( \begin{array}{cccc}
            A_1 & A_2 & \ldots & A_{n-1} \\
            1 & 2 & \ldots & n-1
          \end{array} \right),$
  where $A_i<A_j$ if and only if $ i< j.$  It is now clear that order of  $ G_{n-1}$ is equal to the number of subsets of the set $[n]$ of the form $\{i, i+1\}$ ($1\leq i\leq n-1$) which is $n-1$.
\end{proof}

The following lemma gives us the rank of $\mathcal{ODCT}_n\setminus\{id_{n}\}$.

\begin{lemma}
In $\mathcal{ODCT}_n$, rank ($K_{n-1}) =n-1$.

\end{lemma}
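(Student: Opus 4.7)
The plan is to prove $\textnormal{rank}(K_{n-1}) = n-1$ by establishing matching upper and lower bounds. The upper bound is essentially in hand: since $K_{n-1} = G_1\cup\cdots\cup G_{n-1}$ and by Corollary \ref{C} we have $G_r\subseteq \langle G_{n-1}\rangle$ for $1\leq r\leq n-1$, it follows that $K_{n-1}\subseteq \langle G_{n-1}\rangle$; the reverse inclusion is immediate because $K_{n-1}$ is closed under composition (the image of a product cannot exceed that of either factor). By Lemma \ref{order}, $|G_{n-1}| = n-1$, so $\textnormal{rank}(K_{n-1}) \leq n-1$.

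For the lower bound I plan to show that every generating set $A$ of $K_{n-1}$ must contain the whole of $G_{n-1}$. Fix $\alpha \in G_{n-1}$ and write $\alpha = a_1 a_2\cdots a_k$ with $a_i\in A$. Splitting the product around position $i$ gives $|\textnormal{Im}(a_1\cdots a_k)| \leq |\textnormal{Im}(a_i)|$, so each $a_i$ has image of size at least $n-1$; combined with $A\subseteq K_{n-1}$, this forces $a_i\in G_{n-1}$, and the same splitting shows that every partial product $a_1\cdots a_i$ also lies in $G_{n-1}$.

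The heart of the argument is a short multiplication table on $G_{n-1}$. By Lemma \ref{nj} and Lemma \ref{order}, $G_{n-1} = \{\alpha_1,\ldots,\alpha_{n-1}\}$, where $\alpha_k$ fixes $1,2,\ldots,k$ and sends each $x\geq k+1$ to $x-1$. A direct evaluation will yield the key fact: $\alpha_i\alpha_j \in G_{n-1}$ precisely when $j = n-1$, in which case $\alpha_i\alpha_{n-1} = \alpha_i$ (because $\alpha_{n-1}$ restricts to the identity on $\textnormal{Im}(\alpha_i) = \{1,\ldots,n-1\}$); for $j<n-1$ the value $n-1$ drops out of $\textnormal{Im}(\alpha_i\alpha_j)$ and the product lands in $G_{n-2}$. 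Carrying out this case analysis is the main (mild) obstacle of the proof.

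Applying this classification inductively to the partial products $a_1 a_2$, $(a_1 a_2)a_3$, $\ldots$, each of which lies in $G_{n-1}$, we get $a_j = \alpha_{n-1}$ for every $j\geq 2$ and hence $\alpha = a_1 a_2\cdots a_k = a_1\in A$. Since $\alpha\in G_{n-1}$ was arbitrary, $G_{n-1}\subseteq A$, giving $|A|\geq n-1$ and so $\textnormal{rank}(K_{n-1})\geq n-1$. Combined with the upper bound this yields the claimed equality.
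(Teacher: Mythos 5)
Your proof is correct, and it takes a somewhat different route from the paper's at the crucial "lower bound" step. The paper shows only that $G_{n-1}$ is a \emph{minimal} generating set (deleting any $\tau_i$ breaks generation, since all products of the remaining elements drop rank) and then invokes Doyen's result (Lemma \ref{dj}) that in a finite $\mathcal{J}$-trivial semigroup every minimal generating set is the unique minimum. You instead prove directly that \emph{every} generating set of $K_{n-1}$ must contain all of $G_{n-1}$: factoring $\alpha\in G_{n-1}$ as $a_1\cdots a_k$ forces every factor and every partial product into $G_{n-1}$, and your multiplication table (which I checked: $\alpha_i\alpha_j$ has image $\{1,\ldots,n-2\}$ when $j<n-1$, while $\alpha_{n-1}$ restricts to the identity on $\{1,\ldots,n-1\}$ so that $\alpha_i\alpha_{n-1}=\alpha_i$) then collapses the factorization to $\alpha=a_1$. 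This is the same core computation the paper leaves to the reader, but your packaging is more self-contained --- it bypasses the external $\mathcal{J}$-triviality lemma entirely and yields the stronger conclusion that $G_{n-1}$ lies inside every generating set, from which the rank bound is immediate. The upper-bound half ($K_{n-1}=\langle G_{n-1}\rangle$ via Corollary \ref{C}, plus $|G_{n-1}|=n-1$ from Lemma \ref{order}) matches the paper.
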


\begin{proof}
To prove that the rank ($K_{n-1})= n-1$, it is enough to show that $G_{n-1}$ is a minimal generating set of
$K_{n-1}$, i. e,. $K_{n-1}= \langle G_{n-1} \rangle$ and $\langle G_{n-1} \backslash \{\tau\} \rangle \neq K_{n-1}$ for any $\tau \in G_{n-1}$. Notice that by Corollary \ref{C}, $G_1\subseteq \langle G_{n-1} \rangle$,$G_2 \subseteq \langle G_{n-1} \rangle$,\ldots,$G_{n-1} \subseteq \langle G_{n-1} \rangle$. Thus it easily follows that $G_1\cup G_2 \cup \ldots \cup G_{n-1} \subseteq \langle G_{n-1} \rangle$, i.e., $K_{n-1}\subseteq G_{n-1}$, i.e., $\langle G_{n-1} \rangle = K_{n-1}$.

Notice that
$$ \label{alf} G_{n-1}= \left\{ \left( \begin{array}{cccc}
            \{1,2\} & 3 &  \ldots & n \\
            1 & 2 &   \ldots & n-1
          \end{array} \right),\left( \begin{array}{ccccc}
            1, & \{2,3\} & 4& \ldots & n \\
            1 & 2 & 3 &  \ldots & n-1
          \end{array} \right),\ldots,\left( \begin{array}{cccccc}
            1 & 2 & 3 & \ldots & n-2 & \{n-1,n\} \\
            1 & 2 & 3  &\ldots& n-2 & n-1
          \end{array} \right)\right\}.$$ Take $\tau_{i}=\left( \begin{array}{cccccccc}
            1 & 2 & \ldots & \{i,i+1\}& \ldots& n-2 &n-1 &n \\
            1 & 2 & \ldots& i& \ldots& n-3 & n-2&n-1
          \end{array} \right)\in G_{n-1}$ for $i=1,\ldots,n-1$. \noindent Then one can easily verify that for any $\alpha,\beta \in G_{n-1}\backslash\{\tau_{i}\}$, $h(\alpha\beta)<{n-1}$, $h(\tau_{i}\alpha)<n-1$, $h(\alpha\tau_{i})<n-1$  and moreover, $\alpha\tau_{n-1}=\alpha$ for all $\alpha\in G_{n-1}$. Thus $G_{n-1}$ is a minimal generating set for $K_{n-1}$. Thus since $S$ is a finite $\mathcal{J}$ trivial semigroup then by Lemma \ref{dj}, $G_{n-1}$ is the (unique) minimum generating set for $K_{n-1}$.
\end{proof}
Finally the rank of $\mathcal{ODCT}_n$ is given in the  theorem below.
\begin{theorem} Let $\mathcal{ODCT}_n$ be as defined in equation \eqref{od}.
Then rank ($\mathcal {ODCT}_n) = n$.
\end{theorem}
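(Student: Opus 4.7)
The plan is to combine the preceding lemma (rank$(K_{n-1}) = n-1$) with a careful treatment of the unique element of $\mathcal{ODCT}_n$ that falls outside $K_{n-1}$, namely the identity map on $[n]$.

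First I would observe that by Lemma \ref{nj}, any $\alpha \in \mathcal{ODCT}_n$ with $|\textnormal{Im }\alpha| = n$ must have each block $A_i$ a singleton, and the order-decreasing condition together with $A_i \mapsto i$ forces $A_i = \{i\}$ for every $i$; hence $\alpha = \textnormal{id}_{[n]}$. Therefore $\mathcal{ODCT}_n = K_{n-1} \cup \{\textnormal{id}_{[n]}\}$, a disjoint union.

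For the upper bound, I would take the set $A = G_{n-1} \cup \{\textnormal{id}_{[n]}\}$, which has $(n-1)+1 = n$ elements by Lemma \ref{order}. Since $G_{n-1}$ already generates $K_{n-1}$ by the previous lemma, $A$ generates all of $\mathcal{ODCT}_n$, giving rank$(\mathcal{ODCT}_n) \leq n$.

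For the lower bound, let $A$ be any generating set of $\mathcal{ODCT}_n$. The key observation is that the identity cannot be produced from elements of strictly smaller rank: if $\alpha,\beta \in \mathcal{ODCT}_n$ then $|\textnormal{Im }(\alpha\beta)| \leq \min\{|\textnormal{Im }\alpha|,|\textnormal{Im }\beta|\}$, so no product of elements of $K_{n-1}$ lies outside $K_{n-1}$. Hence $\textnormal{id}_{[n]}$ must belong to $A$. Removing it, the remaining set $A\setminus\{\textnormal{id}_{[n]}\}$ must still generate every element of $K_{n-1}$ (since in any factorisation one may delete the identity factors without changing the product), so by the previous lemma $|A \setminus \{\textnormal{id}_{[n]}\}| \geq n-1$, whence $|A| \geq n$. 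Combining the two bounds yields rank$(\mathcal{ODCT}_n) = n$. I do not anticipate a serious obstacle here: once the identity is identified as the unique rank-$n$ element, the rest is a clean reduction to the previous lemma, and $\mathcal{J}$-triviality (via Lemma \ref{dj}) even guarantees that $G_{n-1} \cup \{\textnormal{id}_{[n]}\}$ is the unique minimum generating set.
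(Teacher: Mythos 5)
Your proposal is correct and follows essentially the same route as the paper: the paper's proof is the one-line observation that $K_{n-1}=\mathcal{ODCT}_n\setminus\{\mathrm{id}_{[n]}\}$ and hence $\mathrm{rank}(\mathcal{ODCT}_n)=\mathrm{rank}(K_{n-1})+1=n$. You have simply made explicit the details the paper leaves implicit (the identity is the unique rank-$n$ element, it cannot be a product of elements of smaller rank, and deleting it from any generating set leaves a generating set of $K_{n-1}$), all of which are sound.
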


\begin{proof}
Notice that $K_{n-1} =\mathcal {ODCT}_n \backslash \{id_{[n]}\}$. Therefore  the rank ($\mathcal {ODCT}_{n}) = \textnormal{ rank } (K_{n-1}) +1=n$, as required.
\end{proof}

\section{Natural Partial Order on the semigroup  $\mathcal{OCT}_{n}$}

Let $S$ be a semigroup and $\alpha,\beta \in S$. Define a relation $\leq$ on a semigroup $S$ by: $\alpha \leq \beta$ ($\alpha, \beta\in S$) if and only if there exist $\lambda,\mu \in S^{1}$ such that $\alpha = \lambda\beta = \beta\mu$ and $\alpha = \alpha\mu$. This relation is known to be the \emph{natural partial order} on a semigroup $S$. If $S$ is regular then $\alpha \leq \beta$ if and only if $\alpha = \epsilon\beta = \beta\eta$ for some $\epsilon,\eta \in E(S)$, and if $S$ is a semilattice of idempotents then $\epsilon \leq \eta$ if and only if $\epsilon = \epsilon\eta = \eta\epsilon$ for some $\epsilon,\eta \in E(S)$.  Partial order relations on various semigroups of the partial transformations   have been investigated by many authors,  see for example  \cite{po1,po5,po6,po7}. It is worth noting that the semigroup $\mathcal{OCT}_n$ is not regular (see \cite{dr}). In this section, we characterize the partial order relation defined above on the semigroups $\mathcal{OCT}_n$ and $\mathcal{ODCT}_n$, respectively.

 Let $\alpha,\beta$ be as expressed in equation \eqref{1}.  Consider \begin{equation} \alpha\beta^{-1} = \{(x,y)\in[n]\times [n]: x\beta = y\alpha \},
\end{equation}
\begin{equation} \alpha\alpha^{-1} = \{(x,y)\in[n]\times [n]: x\alpha = y\alpha \}.
\end{equation}

 Before we begin our investigation we first acknowledge the following known result from \cite{po6}.

\begin{lemma}[\cite{po6}, Theorem 2]\label{rpo1} Let $\alpha,\beta \in \mathcal{P}_n$. Then   $\alpha \leq \beta$ if and only if $\textnormal{dom }\alpha \subseteq \textnormal{dom}~\beta$,  $\textnormal{im}~\alpha \subseteq \textnormal{im}~\beta$, $\alpha\beta^{-1} \subseteq \alpha\alpha^{-1}$ and $\beta\beta^{-1} \cap (\textnormal{dom}~\beta \times \textnormal{dom}~\alpha) \subseteq \alpha\alpha^{-1}$.
\end{lemma}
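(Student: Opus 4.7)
The plan is to prove both directions of the biconditional directly from the unpacked definition: $\alpha \leq \beta$ means there exist $\lambda, \mu \in \mathcal{P}_n^1$ with $\alpha = \lambda\beta = \beta\mu$ and $\alpha = \alpha\mu$.

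For the forward direction, I would assume such witnesses $\lambda, \mu$ exist and derive each of the four conditions by a short unpacking. The containment $\textnormal{dom}~\alpha \subseteq \textnormal{dom}~\beta$ is immediate from $\alpha = \beta\mu$, since the composition forces $x\beta$ to be defined for every $x \in \textnormal{dom}~\alpha$; and $\textnormal{im}~\alpha \subseteq \textnormal{im}~\beta$ is immediate from $\alpha = \lambda\beta$. For the third condition, I would take $(x,y) \in \alpha\beta^{-1}$, so $x\beta = y\alpha$, apply $\mu$ to both sides, and use $\alpha = \beta\mu$ on the left together with $\alpha\mu = \alpha$ on the right to obtain $x\alpha = y\alpha$; the only delicate point is verifying that $x\alpha$ is actually defined, which follows because $x\beta = y\alpha \in \textnormal{dom}~\mu$ (from $\alpha\mu = \alpha$). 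The fourth condition follows by the same trick: if $x\beta = z\beta$ with $x \in \textnormal{dom}~\beta$ and $z \in \textnormal{dom}~\alpha$, applying $\mu$ via $\alpha = \beta\mu$ gives $x\alpha = z\alpha$.

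For the converse, assuming the four conditions I would explicitly construct witnesses $\lambda, \mu \in \mathcal{P}_n^1$. For $\lambda$: since $\textnormal{im}~\alpha \subseteq \textnormal{im}~\beta$, for each $x \in \textnormal{dom}~\alpha$ I may pick any element of $[n]$ mapped by $\beta$ to $x\alpha$ and call it $x\lambda$; this forces $\lambda\beta = \alpha$ by construction. For $\mu$, the natural recipe is two-clause: set $y\mu = x\alpha$ whenever $y = x\beta$ for some $x \in \textnormal{dom}~\alpha$, and set $y\mu = y$ whenever $y \in \textnormal{im}~\alpha$, leaving $\mu$ undefined elsewhere. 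Once $\mu$ is shown to be a well-defined partial map, $\beta\mu = \alpha$ holds by construction and $\alpha\mu = \alpha$ holds because $\mu$ fixes $\textnormal{im}~\alpha$ pointwise.

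The main obstacle, and the genuine content of the lemma, is the well-definedness of $\mu$, which is precisely what conditions (3) and (4) are engineered to supply. Condition (4) rules out a clash inside the first clause: if $x_1, x_2 \in \textnormal{dom}~\alpha$ with $x_1\beta = x_2\beta$, then $(x_1, x_2) \in \beta\beta^{-1} \cap (\textnormal{dom}~\beta \times \textnormal{dom}~\alpha) \subseteq \alpha\alpha^{-1}$, forcing $x_1\alpha = x_2\alpha$. Condition (3) rules out a clash between the two clauses: if $y = x\beta$ for some $x \in \textnormal{dom}~\alpha$ and also $y = z\alpha \in \textnormal{im}~\alpha$, then $(x, z) \in \alpha\beta^{-1} \subseteq \alpha\alpha^{-1}$, giving $x\alpha = z\alpha = y$, so the two candidate values agree. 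With these two consistency checks discharged the construction closes, and the lemma follows.
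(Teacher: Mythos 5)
The paper offers no proof of this lemma at all: it is imported verbatim as Theorem 2 of Marques-Smith and Sullivan \cite{po6}. Your argument reconstructs what is essentially the standard proof of that theorem. Both directions are sound in outline: in the forward direction you correctly isolate the only delicate point (that $x\alpha$ must be shown to be defined before one can assert $(x,y)\in\alpha\alpha^{-1}$), and in the converse you correctly identify conditions (3) and (4) as precisely the two consistency checks needed for $\mu$ to be single-valued.

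One step is asserted too quickly. Because these are partial maps, $\beta\mu=\alpha$ is an equality of partial maps, so besides verifying $(x\beta)\mu=x\alpha$ for $x\in\textnormal{dom}~\alpha$ you must also verify that $\beta\mu$ is undefined outside $\textnormal{dom}~\alpha$; this is not ``by construction'', since your $\mu$ is defined on all of $\{x\beta:x\in\textnormal{dom}~\alpha\}\cup\textnormal{im}~\alpha$ and some $w\in\textnormal{dom}~\beta\setminus\textnormal{dom}~\alpha$ could a priori have $w\beta$ landing in that set. The repair uses the same two hypotheses once more: if $w\beta=x\beta$ with $x\in\textnormal{dom}~\alpha$, condition (4) puts $(w,x)$ in $\alpha\alpha^{-1}$; if $w\beta=z\alpha$, condition (3) puts $(w,z)$ in $\alpha\alpha^{-1}$; and in either case membership in $\alpha\alpha^{-1}$ forces $w\alpha$ to be defined, i.e., $w\in\textnormal{dom}~\alpha$. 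With that half-paragraph added your proof closes. (In the paper's actual application the maps are total, so the issue evaporates there, but the lemma as stated concerns $\mathcal{P}_n$.)
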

Now we have the following lemma.
\begin{lemma}\label{rpo2} Let $\alpha,\beta \in \mathcal{OCT}_n$. If $\alpha\beta^{-1} \subseteq \alpha\alpha^{-1}$. Then $y\beta^{-1} \subseteq y\alpha^{-1}$ for all $y \in \textnormal{im}~\alpha$.
\end{lemma}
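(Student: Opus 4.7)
The plan is to unwind the definitions of $\alpha\beta^{-1}$ and $\alpha\alpha^{-1}$ given in the display before Lemma \ref{rpo1} and chase a single element. This is a purely relational fact; the order-preserving and contraction hypotheses on $\alpha$ and $\beta$ are not actually used, only the set-theoretic meaning of the preimage notation.

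First, fix an arbitrary $y \in \textnormal{im}~\alpha$ and pick any $x \in y\beta^{-1}$, so that $x\beta = y$. The aim is to show $x\alpha = y$, i.e., $x \in y\alpha^{-1}$. Since $y \in \textnormal{im}~\alpha$, there exists some $z \in [n]$ with $z\alpha = y$. Combining these gives $x\beta = y = z\alpha$, which, by the definition of $\alpha\beta^{-1}$ stated just before Lemma \ref{rpo1}, places the pair $(x,z)$ (or $(z,x)$, depending on the convention fixed in that display) into $\alpha\beta^{-1}$.

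Applying the hypothesis $\alpha\beta^{-1} \subseteq \alpha\alpha^{-1}$, the same pair lies in $\alpha\alpha^{-1}$, which by definition means $x\alpha = z\alpha = y$. Hence $x \in y\alpha^{-1}$, and since $x$ was an arbitrary element of $y\beta^{-1}$, we conclude $y\beta^{-1} \subseteq y\alpha^{-1}$. There is no real obstacle here beyond being consistent with the chosen convention for the relational composition $\alpha\beta^{-1}$; the whole argument is a one-line definitional chase, which is presumably why the authors state it as a short lemma to be reused in the subsequent characterisation of the natural partial order on $\mathcal{OCT}_n$.
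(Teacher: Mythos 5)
Your proof is correct and is essentially identical to the paper's own argument: both fix $y\in\textnormal{im}~\alpha$, take $x\in y\beta^{-1}$, produce a witness $z$ (the paper calls it $b$) with $z\alpha = y = x\beta$, observe that $(x,z)\in\alpha\beta^{-1}$ under the paper's convention, and conclude via the hypothesis that $x\alpha = z\alpha = y$. Your remark that the contraction and order-preserving hypotheses play no role is also accurate.
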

\begin{proof} Suppose $\alpha\beta^{-1} \subseteq \alpha\alpha^{-1}$, $y \in \textnormal{im}~\alpha$ and let $x \in y\beta^{-1}$. Then there exists $b \in [n]$ such that $b\alpha=y$ and $y = x\beta$, i.e., $b\alpha=x\beta$, which imply  $(x,b) \in \alpha\beta^{-1}$. Thus, by our assumption, we have $(x,b) \in \alpha\alpha^{-1}$, so that $x\alpha = b\alpha=y$ which implies that . Hence $x \in  y\alpha^{-1}$, as required.
\end{proof}
Let $\alpha$ in $\mathcal{OCT}_n$ be of rank 1. Then we have the following.

\begin{lemma}\label{po1}
Let $\alpha, \ \beta \in \mathcal{OCT}_n$ be such that $\alpha = \left( \begin{array}{cccc}
            [n] \\
            x
          \end{array} \right)$. Then $\alpha \leq \beta$ if and only if $x \in \textnormal{im}~\beta$.
\end{lemma}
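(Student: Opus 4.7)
The plan is to verify both directions of the equivalence directly from the definition of the natural partial order, constructing the required witnesses $\lambda,\mu \in \mathcal{OCT}_n^1$ by hand rather than appealing to Lemma~\ref{rpo1}; that lemma's witnesses live in the ambient semigroup $\mathcal{P}_n$ and need not belong to $\mathcal{OCT}_n$, so it is cleaner to produce explicit elements.

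For the forward direction, suppose $\alpha \leq \beta$. Then by definition $\alpha = \lambda\beta$ for some $\lambda \in \mathcal{OCT}_n^1$. If $\lambda$ is the adjoined identity, then $\alpha = \beta$ and $x \in \textnormal{im}~\beta$ trivially. Otherwise, for any $y \in [n]$, $x = y\alpha = (y\lambda)\beta \in \textnormal{im}~\beta$, which gives the required inclusion.

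For the converse, assume $x \in \textnormal{im}~\beta$ and pick $b \in [n]$ with $b\beta = x$. I would take $\lambda$ to be the constant map $\left( \begin{array}{c} [n] \\ b \end{array} \right)$ and $\mu$ to be $\alpha$ itself, namely the constant map $\left( \begin{array}{c} [n] \\ x \end{array} \right)$. Both are constant maps, hence trivially order-preserving contractions, so $\lambda, \mu \in \mathcal{OCT}_n$. A direct computation then yields $\lambda\beta = \alpha$ (every point is sent to $b\beta = x$), $\beta\mu = \alpha$ (every point is sent under $\mu$ to $x$), and $\alpha\mu = \alpha$ (the constant value $x$ is fixed under $\mu$). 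Hence $\alpha \leq \beta$.

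There is no substantive obstacle here; the essential observation is simply that constant maps always belong to $\mathcal{OCT}_n$, so the witnesses needed by the natural partial order can be exhibited without leaving the subsemigroup. If any care is required it is only to note the $\lambda = 1$ case in the forward direction, which is handled separately to avoid composing with the adjoined identity.
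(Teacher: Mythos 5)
Your proof is correct and follows essentially the same route as the paper's: the forward direction reads off $\textnormal{im}~\alpha \subseteq \textnormal{im}~\beta$ from $\alpha = \lambda\beta$, and the converse uses the same witnesses, namely $\lambda$ the constant map onto a preimage of $x$ under $\beta$ and $\mu = \alpha$. Your explicit handling of the adjoined-identity case is a small extra care the paper omits, but the argument is otherwise identical.
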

\begin{proof} Let $\alpha,\beta \in \mathcal{OCT}_n$, where  $\alpha = \left( \begin{array}{c}
            [n] \\
            x
          \end{array} \right)$. Suppose $\alpha \leq \beta$, i.e., there exist $\lambda,\mu \in \mathcal{OCT}_n$ such that $\alpha = \lambda\beta = \beta\mu$ and $\alpha\mu = \alpha$. Notice that $\alpha = \lambda\beta$. It is easy  to see that $\textnormal{im}~\alpha \subseteq \textnormal{im}~\beta$. Therefore $x \in \textnormal{im}~\beta$.

Conversely, suppose $x \in \textnormal{im}~\beta$. Define $\lambda = \left( \begin{array}{c}
            [n] \\
            y
          \end{array} \right)$ where $y \in x\beta^{-1}$ and $\mu = \alpha$. Thus, it follows that $\alpha = \lambda\beta = \beta\mu$ and $\alpha\mu = \alpha$, and therefore  $\alpha \leq \beta$.
\end{proof}
Now if $\alpha$ is of rank 2, we have the following.
\begin{lemma}\label{po2}
Let $\alpha,\beta \in \mathcal{OCT}_n$ be such that $\alpha = \left( \begin{array}{cc}
            A_1 & A_2 \\
            x+1 & x+2
          \end{array} \right)$. Then $\alpha \leq \beta$ if and only if  $\textnormal{im}~\alpha \subseteq \textnormal{im}~\beta$ and $(\max A_1)\beta = x+1$ and $(\min A_2)\beta = x+2$.
\end{lemma}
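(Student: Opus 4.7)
My plan is to derive the forward direction from Lemma \ref{rpo1} combined with Lemma \ref{rpo2}, using the order-preservation of $\beta$ to pin the $\beta$-preimages of $x+1$ and $x+2$ down to $\max A_1$ and $\min A_2$ \emph{exactly}; and to prove the converse by exhibiting explicit witnesses $\lambda,\mu \in \mathcal{OCT}_n$.

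For the forward direction, I would assume $\alpha \leq \beta$ in $\mathcal{OCT}_n$. Since $\mathcal{OCT}_n^{1} \subseteq \mathcal{P}_n^{1}$, the relation also holds in $\mathcal{P}_n$, so Lemma \ref{rpo1} yields $\textnormal{im}~\alpha \subseteq \textnormal{im}~\beta$ together with $\alpha\beta^{-1} \subseteq \alpha\alpha^{-1}$. Applying Lemma \ref{rpo2} gives $(x+1)\beta^{-1} \subseteq A_1$ and $(x+2)\beta^{-1} \subseteq A_2$, and both preimages are nonempty because $\textnormal{im}~\alpha \subseteq \textnormal{im}~\beta$. Set $y_1 = \max\{y\in[n]:y\beta=x+1\}$ and $y_2 = \min\{y\in[n]:y\beta=x+2\}$. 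Since $\beta$ is order-preserving, $y_1 < y_2$; I then claim $y_2 = y_1+1$. Indeed any integer $z$ with $y_1 < z < y_2$ would satisfy $x+1 \leq z\beta \leq x+2$ by order-preservation, so $z\beta \in \{x+1,x+2\}$, contradicting the extremality of $y_1$ or of $y_2$. Because $A_1 \cup A_2 = [n]$ and $A_1 < A_2$, we have $\min A_2 = \max A_1 + 1$, so combining $y_1 \leq \max A_1$, $y_2 \geq \min A_2$ and $y_2 = y_1 + 1$ forces $y_1 = \max A_1$ and $y_2 = \min A_2$, which are precisely the two equalities claimed.

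For the converse, given the three hypotheses, I would exhibit
\[\lambda = \left(\begin{array}{cc} A_1 & A_2 \\ \max A_1 & \min A_2 \end{array}\right) \quad \textnormal{and} \quad \mu = \left(\begin{array}{cc} \{1,\ldots,x+1\} & \{x+2,\ldots,n\} \\ x+1 & x+2 \end{array}\right),\]
both of which are order-preserving (using $A_1 < A_2$) and contractions (the image spans are of size $1$, bounded by the minimal gap between their two domain blocks), hence lie in $\mathcal{OCT}_n$. A direct computation gives $\lambda\beta = \alpha$ using the hypothesized values $(\max A_1)\beta = x+1$ and $(\min A_2)\beta = x+2$; the identity $\alpha\mu = \alpha$ is immediate because $\mu$ fixes $x+1$ and $x+2$; and $\beta\mu = \alpha$ follows from order-preservation of $\beta$, since for $y \in A_1$ one has $y\beta \leq (\max A_1)\beta = x+1$ whence $(y\beta)\mu = x+1 = y\alpha$, with the symmetric argument on $A_2$. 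These witnesses certify $\alpha \leq \beta$.

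The main obstacle is the forward-direction step that upgrades the inclusions $(x+1)\beta^{-1}\subseteq A_1$ and $(x+2)\beta^{-1}\subseteq A_2$ (which alone are quite weak) to the \emph{exact} equalities $(\max A_1)\beta = x+1$ and $(\min A_2)\beta = x+2$. The leverage comes entirely from the fact that $A_1 \cup A_2 = [n]$ makes $\max A_1$ and $\min A_2$ consecutive integers, so order-preservation of $\beta$ leaves no arithmetic room for the preimages to live anywhere except at that boundary.
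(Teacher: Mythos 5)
Your proof is correct, and the converse half (the explicit witnesses $\lambda$ collapsing $A_1$ to $\max A_1$ and $A_2$ to $\min A_2$, and the two-block $\mu$) is essentially identical to the paper's. The forward direction, however, takes a genuinely different route. The paper argues by contradiction: assuming $(\max A_1)\beta=k\neq x+1$ it places $k$ strictly between $x+1$ and $x+2$ and then contradicts the contraction property of the witness $\mu$ via $\vert(x+2)\mu-k\mu\vert\leq\vert(x+2)-k\vert$ --- a ``metric'' argument that leans on the specific witnesses. You instead import the necessary conditions of Lemma \ref{rpo1} (justified, as you note, because the witnesses for $\alpha\leq\beta$ in $\mathcal{OCT}_n$ also witness the relation in $\mathcal{P}_n$), feed them through Lemma \ref{rpo2} to get $(x+1)\beta^{-1}\subseteq A_1$ and $(x+2)\beta^{-1}\subseteq A_2$, and then finish with a purely order-theoretic squeeze: the extremal preimages $y_1,y_2$ must be consecutive, and since $\min A_2=\max A_1+1$ there is no room for them to sit anywhere but at the block boundary. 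This is the strategy the paper itself only deploys later, for the rank-$p$ case in Theorem \ref{po3}(ii); applying it already at rank $2$ buys you a cleaner and arguably more rigorous argument (the paper's assertion that $x+1<k<x+2$ is under-justified, and is in any case vacuous for integers), at the cost of invoking the two auxiliary lemmas earlier. Both approaches are sound; yours unifies the rank-$2$ and rank-$p$ treatments.
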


\begin{proof}
Let $\alpha,\beta \in \mathcal{OCT}_n$, where $\alpha = \left( \begin{array}{cc}
            A_1 & A_2 \\
            x+1 & x+2
          \end{array} \right)$. Suppose $\alpha \leq \beta$ i.e., there exist $\lambda,\mu \in \mathcal{OCT}_n$ such that $\alpha = \lambda\beta = \beta\mu$ and $\alpha\mu = \alpha$. Since $\alpha = \lambda\beta$ then obviously $\textnormal{im}~\alpha \subseteq \textnormal{im}~\beta$, So $\textnormal{im}~\alpha \subseteq \textnormal{im}~\beta$. Suppose by way of contradiction that $(\max A_1)\beta = k \neq x+1 $ for some $k \in \textnormal{im}~\beta$. Since $\beta$ is order-preserving then we have that \begin{equation}\label{pe1} x+1 < k < x+2.
\end{equation}
Notice that $\alpha = \lambda\beta = \beta\mu$ and $\alpha\mu = \alpha$, for some $\lambda,\mu \in \mathcal{OCT}_n$. Now $\alpha = \beta\mu$ ensure that $k\mu = x+1$ and $(x+i)\mu = x+i$ for all $i \in \{1,2\}$. Thus $\vert{(x+2) - (x+1)}\vert = |(x+2)\mu- k\mu| \leq |(x+2) - k|$ which contradicts equation \eqref{pe1}. Hence $ (\max A_1)\beta = x+1$. In a similar way, one can easily show that $ (\min A_2)\beta = x+2$.

Conversely, suppose that condition (i) and (ii) holds. Define  $\lambda$ as :  $$ y\lambda = \left\{
                                                                                            \begin{array}{ll}
                                                                                              \max A_1, & \textnormal{ if } \hbox{$y \in A_1$;} \\
                                                                                              \min A_2, & \textnormal{ if } \hbox{$y \in A_2$.}
                                                                                            \end{array}
                                                                                          \right.
$$
and $$\mu = \left( \begin{array}{cc}
            \{1, \ldots, x+1\} & \{x+2, \ldots,n \} \\
            x+1 & x+2
          \end{array} \right)\in \mathcal{OCT}_n.$$ \noindent We now show that $\lambda \in \mathcal{OCT}_n$. Notice that if $y_1,y_2 \in A_1$ then $$|y_1\lambda-y_2\lambda| = |\max A_1-\max A_1| = |y_1-y_1| \leq |y_1-y_2|.$$\noindent
Now if $y_1,y_2 \in A_2$. Then $$|y_1\lambda-y_2\lambda| = |\min A_2-\min A_2| = |y_2-y_2| \leq |y_1-y_2|.$$

 Finally, if $y_1 \in A_1$ and $y_2 \in A_2$, then $$|y_1\lambda-y_2\lambda| = |\max A_1-\min A_2| = |a-b| \textnormal{ for all } a \in A_1,~b \in A_2.$$ \noindent In particular, $|y_1\lambda-y_2\lambda|\leq |y_1-y_2|$. Hence $\lambda$ is a contraction and since $A_1<A_2$ then $\lambda\in \mathcal{OCT}_n$.  It is now easy to see that $\alpha = \lambda\beta = \beta\mu$ and $\alpha\mu = \alpha$. Thus $\alpha \leq \beta$, as required.
\end{proof}
 Next let $\alpha\in \mathcal{OCT}_n$ be of rank $p$. Then we have the following.
\begin{theorem}\label{po3}
Let $\alpha,\beta \in \mathcal{OCT}_n$ be such that $\alpha = \left( \begin{array}{cccccc}
            A_1 & A_2 &\ldots &A_{p-1} & A_p \\
            x+1 & x+2 &\ldots& x+p-1& x+p
          \end{array} \right)$, $(3\leq p\leq n)$. Then $\alpha \leq \beta$ if and only if
\begin{itemize}
\item[(i)] $\textnormal{im}~\alpha \subseteq \textnormal{im}~\beta$;
\item[(ii)] $(x+i)\beta^{-1} = A_i$ for all $i \in \{2,3, \ldots, p-1 \}$ and
\item[(iii)] $(\max A_1)\beta = x+1 \textnormal{ and } (\min A_p)\beta = x+p$.
\end{itemize}
\end{theorem}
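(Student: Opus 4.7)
The plan is to mimic the structure of Lemma \ref{po2} but with the additional middle blocks requiring extra bookkeeping. I would prove necessity and sufficiency separately, and in each direction I would treat the three boundary conditions (i), (iii), and the interior condition (ii) in turn.

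For necessity, I would assume $\alpha\leq\beta$, so there exist $\lambda,\mu\in\mathcal{OCT}_n^1$ with $\alpha=\lambda\beta=\beta\mu$ and $\alpha\mu=\alpha$. Condition (i) is immediate from $\alpha=\lambda\beta$. The equation $\alpha\mu=\alpha$ forces $(x+i)\mu=x+i$ for every $i\in\{1,\dots,p\}$, and, together with $\beta\mu=\alpha$, implies that any $w\in[n]$ with $w\beta=x+i$ must lie in $A_i$ (since $w\alpha=w\beta\mu=(x+i)\mu=x+i$). This pair of facts is the workhorse of the rest of the argument. For condition (iii), setting $k=(\max A_1)\beta$ I would sandwich $k$: the element $w_1\in A_1$ with $w_1\beta=x+1$ satisfies $w_1\leq\max A_1$, giving $x+1\leq k$ by order-preservation of $\beta$; and the element $w_2\in A_2$ with $w_2\beta=x+2$ satisfies $w_2\geq\min A_2>\max A_1$, giving $k\leq x+2$. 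Since $k\mu=x+1$ but $(x+2)\mu=x+2$, the value $k=x+2$ is excluded, so $k=x+1$. The argument for $(\min A_p)\beta=x+p$ is symmetric.

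For condition (ii), fix $i\in\{2,\dots,p-1\}$ and $z\in A_i$; I would show $z\beta=x+i$ by sandwiching $z\beta$ between $x+i-1$ and $x+i+1$. Choose $w_{i-1}\in A_{i-1}$ and $w_{i+1}\in A_{i+1}$ mapping under $\beta$ to $x+i-1$ and $x+i+1$ (these exist by (i) and the workhorse fact above). Since $w_{i-1}<z<w_{i+1}$ and $\beta$ is order-preserving, $x+i-1\leq z\beta\leq x+i+1$. But $z\beta\mu=z\alpha=x+i$ while $(x+i-1)\mu=x+i-1$ and $(x+i+1)\mu=x+i+1$, ruling out the two endpoints and forcing $z\beta=x+i$. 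The reverse inclusion $(x+i)\beta^{-1}\subseteq A_i$ is exactly the workhorse fact, so $(x+i)\beta^{-1}=A_i$.

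For sufficiency, I would construct $\lambda,\mu\in\mathcal{OCT}_n$ explicitly. Define $\lambda$ by $y\lambda=\max A_1$ for $y\in A_1$, $y\lambda=y$ for $y\in A_2\cup\cdots\cup A_{p-1}$, and $y\lambda=\min A_p$ for $y\in A_p$. A short case check (using $\max A_1<\min A_2\leq\cdots\leq\max A_{p-1}<\min A_p$) shows $\lambda$ is order-preserving and a contraction, and $y\lambda\beta=y\alpha$ on each block by (ii) and (iii). Define $\mu$ as the clamp map $z\mu=\max\{x+1,\min\{z,x+p\}\}$; this is manifestly order-preserving and a contraction, fixes $\{x+1,\dots,x+p\}$ pointwise (so $\alpha\mu=\alpha$), and $\beta\mu=\alpha$ follows block-by-block: $y\beta\leq x+1$ on $A_1$ by (iii) and order-preservation, $y\beta=x+i$ on $A_i$ for $2\leq i\leq p-1$ by (ii), and $y\beta\geq x+p$ on $A_p$ by (iii).

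The main obstacle is the necessity of (ii): one must argue that the \emph{entire} block $A_i$ collapses to $x+i$ under $\beta$, not just that some preimage lies there. Sandwiching $z\beta$ between $x+i-1$ and $x+i+1$ (which requires both neighbours $A_{i-1}$ and $A_{i+1}$ to exist, hence the hypothesis $3\leq p$ and the restriction $2\leq i\leq p-1$) and then using the two values of $\mu$ on the endpoints to exclude them is the key idea. The boundary blocks $A_1$ and $A_p$ genuinely fail this property, which is why (iii) is weaker than (ii) and asks only for one endpoint of each outer block.
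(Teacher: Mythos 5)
Your proof is correct, and while the sufficiency half coincides with the paper's (same $\lambda$, and your ``clamp'' $\mu$ is exactly the paper's three-block map), your necessity argument takes a genuinely different and more self-contained route. The paper first imports Lemma \ref{rpo1} (the Marques-Smith--Sullivan characterization of the natural partial order on $\mathcal{P}_n$) together with Lemma \ref{rpo2} to obtain $(x+i)\beta^{-1}\subseteq A_i$, and then rules out a strict inclusion by a contradiction that leans on the \emph{contraction} property of $\mu$ applied to a point $b$ squeezed next to $x+i$. You instead derive the same containment directly from $w\alpha=w\beta\mu=(x+i)\mu=x+i$ (your ``workhorse fact'', which needs only that $\mu$ fixes $\textnormal{im}\,\alpha$ pointwise, a consequence of $\alpha\mu=\alpha$), and then pin down $z\beta$ for $z\in A_i$ by sandwiching it between $x+i-1$ and $x+i+1$ via order-preservation and excluding the integer endpoints because $\mu$ separates them from $x+i$. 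This buys two things: it bypasses the external partial-order lemma entirely, and it never invokes the contraction property of $\mu$ in the necessity direction --- only order-preservation of $\beta$ and the functional identities $\beta\mu=\alpha$, $\alpha\mu=\alpha$. Your treatment of (iii) is likewise cleaner: the paper's claimed strict inequality $x+1<k<x+2$ is already impossible for integers as stated, whereas your sandwich $x+1\le k\le x+2$ followed by excluding $k=x+2$ via $k\mu=x+1\ne(x+2)\mu$ is airtight. Your closing observation about why the interior blocks satisfy the stronger condition (ii) while the outer blocks only satisfy (iii) correctly identifies the role of the hypothesis $3\le p$.
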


\begin{proof}
\begin{itemize}
\item[(i)] Let $\alpha,\beta \in \mathcal{OCT}_n$, where $\alpha = \left( \begin{array}{cccccc}
            A_1 & A_2& \ldots& A_{p-1} & A_p \\
            x+1 & x+2& \ldots &x+p-1& x+p
          \end{array} \right)$. Suppose $\alpha \leq \beta$. Then (i) follows obviously from the proof of Theorem \ref{po2}(i).
\item[(ii)] Since $\mathcal{OCT}_n \leq \mathcal{P}_n$ then by Lemma \ref{rpo1}, we have that $\alpha\beta^{-1}\subseteq \alpha\alpha^{-1}$. Moreover, by Lemma \ref{rpo2} we have that $(x+i)\beta^{-1} \subseteq A_i$ for all $i \in \{1,2, \ldots, p \}$. Suppose by way of contradiction that $(x+i)\beta^{-1} \neq A_i$  i.e., $(x+i)\beta^{-1} \subset A_i$ for some $i \in \{2,3,\ldots,p-1 \}$. Let $a \in A_i \backslash (x+i)\beta^{-1}$. Then there exists $b \in \textnormal{m}~\beta\backslash \{x+i\}$ such that $a \in b\beta^{-1}$. Since $\alpha = \beta\mu$ then $b \in \textnormal{dom}~\mu$ and $b\mu = x+i$ for some $i \in \{2,3, \ldots, p-1\}$. Since $\alpha = \alpha\mu$ then $x+i = (x+i)\mu$ for all $i \in \{2,3,\ldots,p-1\}$. There are two cases to consider about $b$.
Case 1 :  If $x+i < b \leq x+i+1$. Then
\begin{equation}\label{r31}\vert{(x+i+1)- b}\vert<\vert{(x+i+1)- (x+i)}\vert.
\end{equation}
\noindent Now $\vert{(x+i+1) -(x+i)}\vert = \vert{(x+i+1)\mu- b\mu}\vert \leq \vert{(x+i+1)- b}\vert$ which contradicts equation \eqref{r31}.

Case 2 :  If $x+i-1 \leq b < x+i$. Then
\begin{equation}\label{r32}\vert{b - (x+i-1)}\vert<\vert{(x+i)- (x+i-1)}\vert.
\end{equation}
\noindent Now $\vert{(x+i)- (x+i-1)}\vert = \vert{b\mu - (x+i-1)\mu}\vert \leq \vert{b - (x+i-1)}\vert$ which contradicts equation \eqref{r32}. So we have  $(x+i)\beta^{-1} = A_i$ for some $i \in \{2,3, \ldots, p-1 \}$
\item[(iii)]   Suppose by way of contradiction that $ (\max A_1)\beta = k \neq x+1 $ for some $k \in \textnormal{im}~\beta$. Since $\beta$ is order-preserving, we have that \begin{equation}\label{pe2} x+1 < k < x+2.
\end{equation}
\noindent Notice that $\alpha = \lambda\beta = \beta\mu$ and $\alpha\mu = \alpha$, for some $\lambda,\mu \in \mathcal{OCT}_n$. Now $\alpha = \beta\mu$ ensure that $k\mu = x+1$ and $(x+i)\mu = x+i$ for all $i \in \{1,2,\ldots,p\}$. Thus $\vert{(x+2) - (x+1)}\vert = |(x+2)\mu- k\mu| \leq |(x+2) - k|$ which contradicts equation  \eqref{pe2}. Hence $ (\max A_1)\beta = x+1$. Similarly, one can show that $ (\min A_p)\beta = x+p$.

Conversely, suppose that conditions (i) - (iii) holds. Define $\lambda$  as:  $$y\lambda = \left\{
                                                                                 \begin{array}{ll}
                                                                                   \max A_1, & \textnormal{ if } \hbox{$y \in A_1$;} \\
                                                                                   y, & \textnormal{ if } \hbox{$y \in A_2\cup \ldots\cup A_{p-1}$;} \\
                                                                                   \min A_p, & \textnormal{ if } \hbox{$y \in A_p$.}
                                                                                 \end{array}
                                                                               \right.
$$ and $\mu = \left( \begin{array}{cccc}
            \{1, \ldots, x+1\} & x+2 \ldots x+p-1 & \{x+p, \ldots,n \} \\
            x+1 & x+2 \ldots x+p-1 & x+p
          \end{array} \right)\in \mathcal{OCT}_n$. Then $\lambda \in \mathcal{OCT}_n$. To see this, notice that if $y_1,y_2 \in A_1$ then $$\vert{y_1\lambda-y_2\lambda}\vert = \vert{\max A_1-\max A_1}\vert = \vert{y_1-y_1}\vert \leq \vert{y_1-y_2}\vert.$$

Now if $y_1,y_2 \in A_p$. Then $$\vert {y_1\lambda-y_2\lambda}\vert = \vert {\min A_p-\min A_p}\vert = \vert {y_p-y_p}\vert \leq \vert {y_1-y_2}\vert.$$\noindent

Also, if $y_1,y_2 \in A_i$ for all $i\in \{2,\ldots,p-1\}$. Then $\vert {y_1\lambda-y_2\lambda}\vert = \vert {y_1-y_2}\vert\leq \vert {y_1-y_2}\vert$.

 Finally, if $y_1 \in A_1$ and $y_2 \in A_p$, then $\vert{y_1\lambda-y_2\lambda}\vert = \vert {\max A_1-\min A_p}\vert = \vert {a-b}\vert \textnormal{ for all } a \in A_1,~b \in A_p$. In particular $$\vert {y_1\lambda-y_2\lambda}\vert\leq \vert {y_1-y_2}\vert.$$ Hence $\lambda$ is a contraction and since $A_i<A_j$ if and only if $i<j$ then $\lambda\in \mathcal{OCT}_n$. It is now easy to see that $\alpha = \lambda\beta = \beta\mu$ and $\alpha\mu = \alpha$. Thus $\alpha \leq \beta$.

\end{itemize}
\end{proof}
Now we illustrate the constructions behind the proof of Theorem \ref{po3} with an example.
\begin{example} For $n = 10$.

$$\alpha = \left( \begin{array}{ccccc}
            \{1,2,3\} & \{4,5\} & 6 & \{7,8\} &\{9,10\} \\
            4 & 5 & 6 & 7 & 8
          \end{array} \right)$$
and
$$\beta_1 = \left( \begin{array}{ccccccc}
            \{1,2\} & 3 & \{4,5\} & 6 & \{7,8\} & 9 & 10 \\
            3 & 4 & 5 & 6 & 7 & 8 & 9
          \end{array} \right).$$
Then it is easy to check that $\alpha$ and $\beta$ satisfies the conditions (i) - (iii) of Theorem \ref{po3}. Now
denote $$\lambda = \left( \begin{array}{cccccccc}
            \{1,2,3\} & 4 & 5 & 6 & 7& 8&\{9,10\} \\
            3 & 4 & 5 & 6 & 7 & 8 & 9
          \end{array} \right)$$ \noindent and

$$ \mu = \left( \begin{array}{ccccc}
            \{1,2,3,4\} & 5 & 6 & 7 & \{8,9,10\} \\
            4 & 5 & 6 & 7 & 8
          \end{array} \right).$$ \noindent One can easily check  that $\alpha = \lambda\beta = \beta\mu$ and $\alpha\mu = \alpha$. Thus $\alpha\leq\beta_1$.

 It worth noting that if

$$\beta_2 = \left( \begin{array}{ccccccc}
            \{1,2\} & 3 & 4 & 5 & \{6,7,8\} & 9 & 10 \\
            3 & 4 & 5 & 6 & 7 & 8 & 9
          \end{array} \right).$$ \noindent Then clearly $\alpha\nleq\beta_2$. To see this, suppose that there exists $\mu \in \mathcal{OCT}_n$ such that $\alpha = \beta_2\mu$. Then notice that $$ 6 = (6)\alpha = (6)\beta_2\mu = (7)\mu \textnormal{  and  } 8 = (9)\alpha = (9)\beta_2\mu = (8)\mu.$$ \noindent Thus $\vert{8 -  6}\vert = \vert{(8)\mu - (7)\mu}\vert \nleq \vert{8 -7}\vert$, which is a contradiction. Therefore $\alpha\nleq \beta_2$  because $\beta_2$ does not satisfy condition (ii) in Theorem \ref{po3}.

\end{example}

We now deduce a characterization of partial order relation on the subsemigroup $\mathcal{ODCT}_n$ from the  results obtained for the semigroup  $\mathcal{OCT}_n$. But before then, it is worth noting that the element $ \left( \begin{array}{cccc}
            [n] \\
            x
          \end{array} \right) $ is the only element of rank 1 in $\in \mathcal{ODCT}_n$. Thus we have the following result.

\begin{corollary}\label{po4}
Let $\alpha = \left( \begin{array}{cccc}
            [n] \\
            1
          \end{array} \right) \in \mathcal{ODCT}_n$. Then $\alpha \leq \beta$ for all $\beta \in \mathcal{ODCT}_n$.
\end{corollary}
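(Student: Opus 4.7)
The plan is to exhibit explicit witnesses for the relation $\alpha \leq \beta$ with the simplest possible choice: take $\lambda = \mu = \alpha$ (the constant map sending everything to $1$). First I would observe, via Lemma \ref{nj}, that every $\beta \in \mathcal{ODCT}_n$ has image $\{1,2,\ldots,p\}$ with first block $B_1 \ni 1$, so in particular $1\beta = 1$, i.e., $1 \in \textnormal{im}~\beta$. This mirrors the hypothesis of Lemma \ref{po1} applied in the larger semigroup $\mathcal{OCT}_n$.

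Next I would check that $\alpha$ itself lies in $\mathcal{ODCT}_n$: it is order-preserving (both values are $1$), order-decreasing ($1 \leq x$ for every $x \in [n]$), and a contraction ($|x\alpha - y\alpha| = 0$). So $\lambda, \mu \in \mathcal{ODCT}_n$ as required by the definition of the natural partial order (which uses multipliers from $\mathcal{ODCT}_n^{1}$).

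The three required equalities collapse to trivialities. Concretely, $\lambda\beta = \alpha\beta$ sends every $x \in [n]$ to $(1)\beta = 1 = x\alpha$, so $\lambda\beta = \alpha$; similarly $\beta\mu = \beta\alpha$ sends $x$ to $(x\beta)\alpha = 1 = x\alpha$, and $\alpha\mu = \alpha\alpha$ sends $x$ to $(1)\alpha = 1 = x\alpha$. Hence $\alpha = \lambda\beta = \beta\mu$ and $\alpha\mu = \alpha$, proving $\alpha \leq \beta$.

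The only potential subtlety is that the partial order on $\mathcal{ODCT}_n$ requires multipliers inside $\mathcal{ODCT}_n$, not merely inside $\mathcal{OCT}_n$, so one cannot invoke Lemma \ref{po1} verbatim; but since $\alpha$ itself already belongs to $\mathcal{ODCT}_n$, this is not an obstacle. I expect the entire proof to fit in a few lines.
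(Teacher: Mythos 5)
Your proof is correct and is essentially identical to the paper's: the paper also takes $\lambda = \mu$ to be the constant map $\left( \begin{array}{c} [n] \\ 1 \end{array} \right)$, i.e., $\alpha$ itself, and concludes $\alpha = \lambda\beta = \beta\mu$ and $\alpha\mu = \alpha$. You merely spell out the verification (in particular that $1\beta = 1$ by Lemma \ref{nj}) in more detail than the paper does.
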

\begin{proof}
Let $\alpha = \left( \begin{array}{cccc}
            [n] \\
            1
          \end{array} \right)$ and $\beta = \left( \begin{array}{cccc}
            B_1 &  \ldots  & B_p \\
            1 &  \ldots & p
          \end{array} \right)$. Denote $\lambda = \mu = \left( \begin{array}{cccc}
            [n] \\
            1
          \end{array} \right)$. Therefore  $\alpha = \lambda\beta = \beta\mu$ and $\alpha\mu = \alpha$. Thus $\alpha \leq \beta$.
\end{proof}

We next deduce a characterization of partial order relation on $\mathcal{ODCT}_n$ (if $\alpha$ is of rank 2) from Theorem \ref{po2}.

\begin{corollary}\label{po5}
Let $\alpha,\beta \in \mathcal{ODCT}_n$ be such that $\alpha = \left( \begin{array}{cccc}
            A_1 & A_2 \\
            1 & 2
          \end{array} \right)$. Then $\alpha \leq \beta$ if and only if $(\max A_1)\beta = 1 \textnormal{ and } (\min A_2)\beta = 2$.
\end{corollary}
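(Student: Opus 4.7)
The plan is to deduce this directly from Theorem \ref{po2}, paying attention to the fact that the witnesses $\lambda, \mu$ must now lie in $\mathcal{ODCT}_n^1$ rather than in $\mathcal{OCT}_n^1$. The forward direction is essentially free: since $\mathcal{ODCT}_n \subseteq \mathcal{OCT}_n$, any witnesses certifying $\alpha \leq \beta$ inside $\mathcal{ODCT}_n$ also certify it inside $\mathcal{OCT}_n$; by Lemma \ref{nj} the offset $x$ appearing in Theorem \ref{po2} is $0$, and its conclusion immediately yields $(\max A_1)\beta = 1$ and $(\min A_2)\beta = 2$. The third condition $\textnormal{im}~\alpha \subseteq \textnormal{im}~\beta$ of Theorem \ref{po2} is automatic once these two equalities hold (both produce elements of $\textnormal{im}~\beta$), so it may simply be dropped.

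The converse is the delicate part, and I expect this to be the main obstacle: the $\lambda$ constructed in the proof of Theorem \ref{po2} sends every $y \in A_1$ to $\max A_1$ and is therefore \emph{not} order-decreasing, so it fails to lie in $\mathcal{ODCT}_n$. I will repair this by exploiting additional structure that comes from $\beta$ being order-decreasing. Writing $\beta = \left(\begin{array}{ccc} B_1 & \ldots & B_q \\ 1 & \ldots & q\end{array}\right)$ via Lemma \ref{nj}, each $B_i$ is an interval and $B_1 = \{1, \ldots, \max B_1\}$ is an initial segment of $[n]$. The hypothesis $(\max A_1)\beta = 1$ places $\max A_1$ inside $B_1$, which in turn forces the whole initial segment $A_1 = \{1, \ldots, \max A_1\}$ into $B_1$; similarly $(\min A_2)\beta = 2$ places $\min A_2$ in $B_2$, so in particular $\min A_2 > \max B_1 \geq \max A_1$.

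With these containments in hand, I will take
\[
y\lambda = \begin{cases} y, & y \in A_1, \\ \min A_2, & y \in A_2, \end{cases} \qquad \mu = \left(\begin{array}{cc} 1 & \{2,\ldots,n\} \\ 1 & 2 \end{array}\right).
\]
Routine verifications place both $\lambda$ and $\mu$ in $\mathcal{ODCT}_n$: the only nontrivial contraction check for $\lambda$ is the case $y_1 \in A_1, y_2 \in A_2$, where $|y_1\lambda - y_2\lambda| = \min A_2 - y_1 \leq y_2 - y_1$. The identities $\lambda\beta = \alpha = \beta\mu$ then follow from $A_1 \subseteq B_1$, $\min A_2 \in B_2$, and the fact that elements of $A_2$ lie strictly above $B_1$ so that $\beta$ sends them to values $\geq 2$; and $\alpha\mu = \alpha$ is immediate because $\mu$ fixes both $1$ and $2$. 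Thus $\alpha \leq \beta$ holds inside $\mathcal{ODCT}_n$, completing the converse.
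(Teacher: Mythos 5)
Your proposal is correct, and in the converse direction it takes a genuinely different (and more careful) route than the paper. The paper proves the corollary in both directions simply by quoting Theorem \ref{po2}: for the converse it notes that $\textnormal{im}\,\alpha=\{1,2\}\subseteq\textnormal{im}\,\beta$ holds automatically and then concludes $\alpha\leq\beta$ from that theorem. But Theorem \ref{po2} only produces witnesses $\lambda,\mu\in\mathcal{OCT}_n$, and since the natural partial order is defined relative to the ambient semigroup, this literally establishes $\alpha\leq\beta$ in $\mathcal{OCT}_n$, not in $\mathcal{ODCT}_n$. You correctly spot that the $\lambda$ built in the proof of Theorem \ref{po2} sends all of $A_1$ to $\max A_1$ and is therefore order-increasing on $A_1$, hence outside $\mathcal{ODCT}_n$ whenever $|A_1|>1$. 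Your replacement $\lambda$ (the identity on $A_1$, constant $\min A_2$ on $A_2$) is order-decreasing, order-preserving and a contraction, and your verifications that $A_1\subseteq B_1$, that $A_2$ lies above $B_1$, and hence that $\lambda\beta=\alpha=\beta\mu$ and $\alpha\mu=\alpha$, all check out (your $\mu$ coincides with the paper's $\mu$ at $x=0$ and is an idempotent of $\mathcal{ODCT}_n$). What your version buys is a proof that the natural partial order on $\mathcal{ODCT}_n$ agrees with the restriction of the order on $\mathcal{OCT}_n$ for these elements --- a point the paper tacitly assumes rather than proves. The forward direction is handled identically in both arguments.
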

\begin{proof}
Let $\alpha,\beta \in \mathcal{ODCT}_n$, where $\alpha = \left( \begin{array}{cccc}
            A_1 & A_2 \\
            1 & 2
          \end{array} \right)$. Suppose $\alpha \leq \beta$. Since $ \mathcal{ODCT}_n\subset \mathcal{OCT}_n$ then $\alpha,\beta \in \mathcal{OCT}_n$.  Thus by Theorem \ref{po2} we have $\textnormal{im}~\alpha \subseteq \textnormal{im}~\beta$ and $(\max A_1)\beta = 1$ and $(\min A_2)\beta = 2$. Notice  that $\textnormal{im}~\alpha=\{1,2\}\subseteq \textnormal{im}~\beta$ ($p\geq 2$) always holds by Lemma \ref{nj}.

Conversely, $(\max A_1)\beta = 1 \textnormal{ and } (\min A_2)\beta = 2$. Then clearly, $\textnormal{im}~\alpha=\{1,2\}\subset \textnormal{im}~\beta=\{1,2,\ldots,p\}$ ($p\geq 2$) and $(\max A_1)\beta = 1+0 \textnormal{ and } (\min A_2)\beta = 2+0$, and therefore by Theorem \ref{po2} we have $\alpha \leq \beta$, as required.
\end{proof}

Now we have the following corollary and its proof follows from Theorem \ref{ab} which is similar to the proof of Corollary \ref{po5}, thus we omit it.

\begin{corollary}\label{po6}
Let $\alpha,\beta \in \mathcal{ODCT}_n$ be such that $\alpha = \left( \begin{array}{cccc}
            A_1 &  \ldots  & A_p \\
            1 &  \ldots & p
          \end{array} \right)$, $(1\leq p\leq n)$. Then $\alpha \leq \beta$ if and only if
\begin{itemize}
\item[(i)] $(\max A_1)\beta = 1 \textnormal{ and } (\min A_p)\beta = p$; and
\item[(ii)] $(i)\beta^{-1} = A_i$ for some $i \in \{2,3, \ldots, p-1 \}$.
\end{itemize}
\end{corollary}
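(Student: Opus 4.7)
The plan is to follow the strategy of Corollary \ref{po5}, leveraging the inclusion $\mathcal{ODCT}_n \subset \mathcal{OCT}_n$ and appealing to Theorem \ref{po3}. By Lemma \ref{nj}, every element $\alpha \in \mathcal{ODCT}_n$ has the canonical form with $x=0$, so conditions (i)--(iii) of Theorem \ref{po3} collapse into conditions (i) and (ii) of the corollary; in particular, the image containment $\textnormal{im }\alpha \subseteq \textnormal{im }\beta$ is absorbed, because (ii) yields $2,\ldots,p-1\in\textnormal{im }\beta$ and (i) yields $1,p\in\textnormal{im }\beta$.

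For the forward direction, I would observe that if $\alpha\le\beta$ in $\mathcal{ODCT}_n$, then the witnesses $\lambda,\mu\in\mathcal{ODCT}_n^1 \subseteq \mathcal{OCT}_n^1$ also witness $\alpha\le\beta$ in $\mathcal{OCT}_n$, so Theorem \ref{po3} (specialised to $x=0$) immediately delivers (i) and (ii).

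For the converse, the case $p=1$ is Corollary \ref{po4}, so I may assume $p\ge 2$. Writing $\beta$ with kernel classes $B_1,\ldots,B_q$, I would first extract a structural refinement from the hypotheses: conditions (i) and (ii), combined with the convexity of the kernel classes of $\beta\in\mathcal{OCT}_n$, force $A_1=B_1$, $A_i=B_i$ for $2\le i\le p-1$, and $B_p\cup B_{p+1}\cup\cdots\cup B_q = A_p$ with $\min B_p=\min A_p$. I would then take
$$\mu=\left(\begin{array}{cccc} 1 & 2 & \ldots & \{p,p+1,\ldots,n\} \\ 1 & 2 & \ldots & p \end{array}\right) \in E(\mathcal{ODCT}_n)$$
(an idempotent by Lemma \ref{2}) and define $\lambda\in\mathcal{T}_n$ by $y\lambda = \min(y,\max B_p)$. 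Routine checks then confirm that $\lambda$ is order-preserving, order-decreasing, and a contraction (so $\lambda\in\mathcal{ODCT}_n$), and that $\alpha = \lambda\beta = \beta\mu$ and $\alpha\mu=\alpha$.

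The main obstacle is that the $\lambda$ constructed in the proof of Theorem \ref{po3} is not order-decreasing (it sends $y\in A_1$ to $\max A_1$, which may exceed $y$), so the witnesses from the ambient semigroup $\mathcal{OCT}_n$ cannot be reused verbatim and must be rebuilt inside $\mathcal{ODCT}_n$. What rescues the argument is the structural refinement $A_i=B_i$ for $i<p$: it ensures that the retraction $y\lambda = \min(y,\max B_p)$ acts as the identity on $A_1\cup A_2\cup\cdots\cup A_{p-1}$ and collapses the tail of $A_p$ into $B_p$, so that $\lambda\beta$ reproduces $\alpha$ precisely.
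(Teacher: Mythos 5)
Your proof is correct for $p\ge 2$ and is more careful than the paper's, which omits the argument with the remark that it ``follows from Theorem \ref{po3}'' in the same way that Corollary \ref{po5} follows from Lemma \ref{po2}. The point you isolate is a real one: the natural partial order is defined relative to the ambient semigroup, so the converse direction needs witnesses $\lambda,\mu\in\mathcal{ODCT}_n^1$, and the $\lambda$ built in the proof of Theorem \ref{po3} (which sends every $y\in A_1$ to $\max A_1$) is order-increasing on $A_1$ and hence unavailable; citing Theorem \ref{po3} as the paper does only yields $\alpha\le\beta$ with respect to $\mathcal{OCT}_n$. Your structural refinement ($A_i=B_i$ for $i<p$ and $A_p=B_p\cup\dots\cup B_q$ with $\min B_p=\min A_p$, which does follow from (i), (ii) and the convexity of the kernel classes of an order-preserving map) together with the retraction $y\lambda=\min(y,\max B_p)$ and the idempotent $\mu$ of Lemma \ref{2} genuinely closes this gap: $\lambda$ is order-preserving, order-decreasing and a contraction, and the identities $\alpha=\lambda\beta=\beta\mu$ and $\alpha\mu=\alpha$ all check out. (You read condition (ii) as ``for all $i$'', which is surely what is intended, matching Theorem \ref{po3}(ii).)

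Two loose ends. First, in the forward direction you invoke Theorem \ref{po3}, which is stated only for $3\le p\le n$; for $p=2$ the forward implication should instead be drawn from Lemma \ref{po2} (condition (ii) being vacuous there). Second, the statement is actually false at $p=1$: then $A_1=[n]$, so condition (i) forces $n\beta=1$, yet Corollary \ref{po4} gives $\alpha\le\beta$ for every $\beta$, e.g.\ $\beta=\mathrm{id}_{[n]}$, so the ``only if'' direction fails. That is a defect of the statement rather than of your argument, but it should be flagged: what your proof establishes is the stated equivalence exactly for $2\le p\le n$.
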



\begin{thebibliography}{99}
\markboth{Reference}{}






\bibitem{au} Adeshola, A. D. and Umar, A. Combinatorial Results For Certain semigroups of Order-Preserving Full Contraction mappings of a Finite Chain. \emph{JCMCC} (2018) (106) 37--49.
\bibitem{m1}  Ali, B. Umar, A. and Zubairu, M. M. Regularity and Green's relation for the semigroups of partial and full contractions of a finite chain. \emph{(Submitted)}.

\bibitem{gu}  Al-Kharousi, F., Garba, G. U., Ibrahim, M. J.,  Imam, A. T., and  Umar, A. On the semigroup of finite order-preserving partial injective contraction mappings. \emph{(Submitted)}.

\bibitem{ayik} Bugay, L., Melek,   Y., and  Ayik, H. The Ranks Of Certain Semigroups Of Partial Isometries. \emph{Semigroup Forum}, 97, (2018), 214--222.
\bibitem{leyla} Bugay, L. On the ranks of certain ideal of monotone contractions. \emph{ Hecet. J. Math. Stat.} \textbf{49} (6) (2020),  1988--1996.

\bibitem{po1} Chaiya, Y. Honyam, P. and Sanwong, J. Natural Partial Orders on Transformation Semigroups with Fixed Sets. \emph{ International Journal of Mathematics and Mathematical Sciences} Volume 2016, Article ID 2759090, 7 pages, 2016. https://doi.org/10.1155/2016/2759090.
\bibitem{jd}  Doyen, J.  \emph{Equipotence et unicit´e de syst`emes g´en´erateurs minimaux dans certains monoides,} \emph{Semigroup Forum} 28 (1984), 341--346.
\bibitem{f} Fountain, J. B. Abundant Semigroups. \emph{Proc. Lond. Math. Soc. } \textbf{44} (1982), 113--125.
\bibitem{gur1} Garba, G. U. and Imam, A. T.  Generators and Ranks in Finite Partial Transformation Semigroups. \emph{Algebra and Discrete Math.} {\bf 23} (1994), 237--248.
\bibitem{gu1} Garba, G. U. Nilpotents in semigroup of partial one-to-one order-preserving mappings. \emph{Semigroup Forum} {\bf48}, (1994), 37--49.
\bibitem{gu2} Garba, G. U. Nilpotents in semigroups of partial order-preserving transformations. \emph{Proc. of the Edinburgh Math. Soc.} {\bf37},(1994), 361--377.
\bibitem{gu1} Garba, G. U., Ibrahim, M. J. and Imam, A. T. On certain semigroups of full contraction maps of a finite chain.\emph{ Turk. J. math}. \textbf{41} (2017) 500--507.
\bibitem{GH} Gomes, G. M. S and Howie, J. M. On the ranks of certain finite semigroups of transformations. \emph{{Semigroup Forum}} {\bf 45} (3) (1992), 272--282.
\bibitem{ph} Higgins, P. M. \emph{Techniques of semigroup theory.} Oxford university Press (1992).
\bibitem{hig1} Higgins, P. M. Combinatorial results for the  semigroups of order-preserving mappings. \emph{Math. Proc. Camb. Phil. Soc.}  \textbf{113} (1993), 281--296.
\bibitem{hig} Higgins, P. M. Idempotent depth in semigroups of order-preserving mappings. \emph{Proc. Roy. Soc. Edinburgh Sect.} A \textbf{124}(5) (1994), 1045--1058.
\bibitem{howi} Howie, J. M. \emph{Fundamental of semigroup theory}. London Mathematical Society, New series 12. \emph{The Clarendon Press, Oxford University Press}, 1995.

\bibitem{po5} Kowol, G. and Mitsch, H. Naturally Ordered Transformation Semigroups. \emph{Monatshefte fur Mathematik}, \textbf {102} (1986), 115--118.
\bibitem{lu} Laradji, A. and Umar, A. On certain finite semigroups of order-decreasing transformations I. \emph{Semigroup Forum}  (69) (2004), 184--200.
\bibitem{po7} Mitsch, H. A Natural Partial Order for Semigroups, \emph{Proc.  American Math. Soc.}, \textbf{97} (1986) 384--388.

\bibitem{po6} Marques-Smith, M. P. O. and Sullivan, R. P. Partial Orders on Transformation Semigroups, \emph{Monatshefte fur Mathematik}, \textbf{140} (2003), 103--118.
\bibitem{grf1} Magill, K. D. Jr and Subbiah, S. Green's relations for regular elements of sendwich  semigroups I. Genaral results. \emph{proc. London Maths. Soc.} {\bf 31} (3) (1975), no. 2, 194--210.
\bibitem{grf2} Magill, K. D. Jr and Subbiah, S. Green's relations for regular elements of sendwich  semigroups II. \emph{Austral. Maths. Soc. Ser. A} {\bf 25} (1978), no. 1, 45--65.
\bibitem{grf3} Magill, K. D. Jr. and Subbiah, S. Green's relations for regular elements of semigroups of endomorphisms. \emph{Canad. J. Maths} {\bf 26} (1974), 1484--1497.

\bibitem{R1} Pei, H. S. Regularity and Green's Relations the Semigroups of Transformations that Preserve an equivalence. \emph{Comm. Algebra} {\bf 33} (2005), no 1, 109--118.
\bibitem{R2} Sun, L., Pei, H. S. and Cheng, Z. X. Regularity and Green's Relations the Semigroups of Transformations that Preserve Orientation and an equivalence. \emph{Semigroups Forum} {\bf 74} (2007), no 3, 473--486.
\bibitem{kt} Toker, K. Rank of some subsemigroups of full contraction mapping on a finite chain. \emph{ J.BAUN Inst. Sci. Technol.,} 22(2), (2020) 403--414.
\bibitem{ud} Umar, A., On the semigroups of order decreasing finite full transformations, \emph{Proc. Roy. Soc. Edinburgh Sect.} A \textbf{120} (1992), 129--142.
\bibitem{u} Umar, A. : On the Ranks of Certain Finite Semigroups of Order-decreasing Transformations. \emph{Portugaliae Mathematica} Vol. 53, (1996), 23--34.


\bibitem{af}Umar, A. and Al-Kharousi, F. Studies in semigroup of contraction mappings of a finite chain. The
Research Council of Oman Research grant proposal No. ORG/CBS/12/007, 6th March 2012.
 \bibitem{mans1} Umar, A. and Zubairu, M. M. On certain semigroups of partial contractions of a finite chain.
arXiv:1803.02604v1.
\bibitem{dr} Umar, A. and Zubairu, M. M. On certain semigroups of full contractions of a finite chain.
arXiv:1804.10057vl.

\bibitem{am} Umar, A. and Zubairu, M. M. On certain semigroups of  contractions mappings of a finite chain.  {\em Algebra and Descrete Math.}  \textbf{32} (2021), No. 2,  299--320.
\bibitem{py} Zhao, P. and Yang, M. Regularity and Green's relations on semigroups of transformation preserving order and compression.\emph{ Bull. Korean Math. Soc.} \textbf{49} (2012), No. 5,  1015--1025.

\end{thebibliography}
\end{document}